\newcommand{\RR}{\mathbb{R}}
\newcommand{\QQ}{\mathbb{Q}}
\newcommand{\ZZ}{\mathbb{Z}}
\newcommand{\NN}{\mathbb{N}}
\newcommand{\FF}{\mathbb{F}}
\newcommand{\kk}{\mathbf{k}}
\newcommand{\OP}{\operatorname}
\newtheorem{theorem}{Theorem}[section] 
\newtheorem{lemma}[theorem]{Lemma}     
\newtheorem{corollary}[theorem]{Corollary}
\newtheorem{proposition}[theorem]{Proposition}
\theoremstyle{definition}
\newtheorem{definition}[theorem]{Definition}
\newtheorem{example}[theorem]{Example}
\theoremstyle{remark}
\newtheorem{remark}[theorem]{Remark}
\title[Nontriviality of the characteristic algebra]
 {Nontriviality results for the characteristic algebra of a DGA} 
\author{Georgios Dimitroglou Rizell}
\thanks{The author is supported by the grant KAW 2013.0321 from the Knut and Alice Wallenberg Foundation. Part of this work was done during a visit of the author to the Institut Mittag-Leffler (Djursholm, Sweden).}
\begin{document}

\maketitle

\begin{abstract}
Assume that we are given a semifree noncommutative differential graded algebra (DGA for short) whose differential respects an action filtration. We show that the canonical unital algebra map from the homology of the DGA to its characteristic algebra, i.e.~the quotient of the underlying algebra by the two-sided ideal generated by the boundaries, is a monomorphism. The main tool that we use is the weak division algorithm in free noncommutative algebras due to P. Cohn.
\end{abstract}

\section{Introduction}
Differential graded algebras (DGAs for short) appear in algebraic topology. For example, they appear in the formulation of rational homotopy theory in \cite{Sullivan:Infinitesimal} due to D. Sullivan. They also appear in modern symplectic and contact topology, such as in Legendrian contact homology by Y. Chekanov \cite{DiffAlg} as well as in the more general theory of symplectic field theory by Y. Eliashberg, H. Hofer, and A. Givental \cite{IntroSFT}. We will be interested in DGAs that naturally appear in the latter setting, of which we give a very rough outline below in Section \ref{sec:contact}. This is also the setting in which all applications known to the author can be found. However, we emphasise that the results in this paper are purely algebraic.

We will be mainly interested in DGAs that are finitely generated and semifree, and thus in particular fully noncommutative, as they appear in the geometric context of Legendrian contact homology. These DGAs will also be equipped with an action filtration which is respected by the differential, which naturally appears in the latter context. We refer to Section \ref{sec:setup} for the precise algebraic definitions.

Although easily described, it is in general not an easy problem to distinguish two DGAs. In the geometric context of Legendrian contact homology considered here, one is in particular interested in the stable-tame isomorphism class of the DGA (see Section \ref{sec:stabletame}). In \cite{Computable} L. Ng introduced the characteristic algebra as a tool to study DGAs under this relation. The question that we will given an answer to here is: to what extent does the characteristic algebra remember the homology algebra of the DGA? In addition, we briefly discuss acyclic DGAs in Section \ref{sec:acyclic}.

\subsection{Main results}
In the following we consider a finitely generated semifree, thus fully noncommutative, DGA $(\mathcal{A},\partial)$ over a field $\kk$ which is of the form considered in Section \ref{sec:setup}, together with its associated characteristic algebra $\mathcal{C}:=\mathcal{A}/\mathcal{A}\partial(\mathcal{A})\mathcal{A}$. In particular we assume that the differential $\partial$ respects an action filtration as postulated by condition \ref{F} in the same section. For short, we say that $(\mathcal{A},\partial)$ is a {\bf DGA with action filtration}. Our main result in this setting is as follows.
\begin{theorem}
\label{thm:main}
Consider a DGA $(\mathcal{A},\partial)$ with action filtration as above, whose corresponding characteristic algebra is denoted by $\mathcal{C}$. The natural unital algebra morphism
\[H(\mathcal{A},\partial) \to \mathcal{C}\]
induced by the inclusion of the cycles is a monomorphism.
\end{theorem}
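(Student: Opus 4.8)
The plan is first to reformulate the statement homologically. Since $\partial(\mathcal{A})\subseteq I:=\mathcal{A}\partial(\mathcal{A})\mathcal{A}$, the differential induced on $\mathcal{C}=\mathcal{A}/I$ vanishes, the assignment $[z]\mapsto z+I$ is well defined, and the morphism in question is the composite $\ker\partial\hookrightarrow \mathcal{A}\twoheadrightarrow\mathcal{C}$ passed to homology. Its kernel is $\ker\partial\cap I$, so the theorem is equivalent to the inclusion
\[ \ker\partial\cap I\subseteq \partial(\mathcal{A}), \]
i.e.\ every cycle lying in the ideal generated by the boundaries is itself a boundary (the reverse inclusion being clear). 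I would also record at the outset that, by the Leibniz rule, $I$ coincides with the two-sided ideal $(\partial x_1,\dots,\partial x_n)$ generated by the boundaries of the finitely many free generators, since $\partial$ of any word is a signed sum of terms each containing some $\partial x_i$ as an interior factor.

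Next I would set up leading terms. Condition \ref{F} makes $\mathcal{A}$ a filtered algebra for the action valuation $\mathfrak{a}$ (the value of a word being the sum of the actions of its letters), with $\partial$ strictly decreasing $\mathfrak{a}$; in particular the induced differential on the associated graded is zero. The point of Cohn's weak algorithm is that the free algebra satisfies it with respect to this valuation, which guarantees that leading terms multiply without cancellation---so that $\mathfrak{a}(ab)=\mathfrak{a}(a)+\mathfrak{a}(b)$ and the associated graded is again a free algebra, hence a domain---and, more importantly, provides a division with remainder: a family of relations among elements that is detected at the level of leading terms can be lifted to genuine elements. This is the tool that controls the cancellations forced by the cycle condition.

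The heart of the argument is then a descending induction on the leading action $\mathfrak{a}(z)$ of a cycle $z\in I$. Writing $z=\sum_k u_k(\partial x_{i_k})v_k$ and applying $\partial$, the identity $\partial^2=0$ yields the syzygy
\[ \sum_k (\partial u_k)(\partial x_{i_k})v_k \;\pm\; \sum_k u_k(\partial x_{i_k})(\partial v_k)=0 \]
among the generators $\partial x_{i_k}$ with coefficients in $\mathcal{A}$. I would feed this relation into the weak division algorithm: because leading terms do not cancel accidentally, the relation forces the top-action part of the coefficients to organise into a \emph{trivial} syzygy, i.e.\ one in which the surviving coefficients are themselves cycles at leading order. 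Consequently the leading part of $z$ agrees with that of the genuine boundary $\partial\bigl(\sum_k \pm\, u_k x_{i_k} v_k\bigr)$ obtained by replacing each $\partial x_{i_k}$ by $x_{i_k}$. Subtracting this boundary produces a new cycle in $I$ of strictly smaller action; since the attained actions form a discrete set bounded below, the process terminates and exhibits $z$ as a finite sum of boundaries, hence a boundary. This step---extracting from the bare relation $\partial z=0$ the fact that the leading term of a cycle in $I$ is realised by a boundary---is where essentially all the difficulty lies, and is exactly what Cohn's weak algorithm is invoked to supply; the remaining bookkeeping (signs, and the simultaneous reduction on the left and on the right) is routine. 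Establishing the displayed inclusion completes the proof.
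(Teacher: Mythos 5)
Your reduction of the theorem to the inclusion $\ker\partial\cap I\subseteq\partial(\mathcal{A})$ and your identification of the two key tools (the action valuation decreased by $\partial$, and Cohn's weak algorithm) match the paper. But the central step of your induction is asserted rather than proved, and the mechanism you describe does not work as stated. First, the weak algorithm is a \emph{one-sided} division theory: it controls left $\nu$-dependence of families and left ideals. Your syzygy $\sum_k(\partial u_k)(\partial x_{i_k})v_k\pm\sum_k u_k(\partial x_{i_k})(\partial v_k)=0$ is a two-sided relation, and there is no version of the weak algorithm that lets you conclude anything about its coefficients directly. The paper deals with this by first applying the Leibniz rule to rewrite $x=\sum u_i\partial(v_i)w_i$ as $\sum u_i\partial(v_iw_i)-\sum(-1)^{|v_i|}u_iv_i\partial(w_i)$, placing $x$ in a genuine \emph{left} ideal generated by boundaries; you never perform this reduction. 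Second, even in the one-sided setting, extracting ``the top-action part of the coefficients organises into a trivial syzygy'' from $\partial z=0$ requires the generating boundaries to form a $\nu$-independent (hence free) family; an arbitrary generating set of boundaries has no such property. Producing a \emph{free} generating set that still consists of \emph{boundaries} is exactly the paper's Proposition \ref{prp:main}, proved by a delicate induction (properties \ref{A} and \ref{B}) that uses $\nu(\partial(x))<\nu(x)$ at two separate points. This is the actual content of the theorem, and your sketch treats it as something the weak algorithm ``supplies'' for free. Once freeness is available, the conclusion is exact ($\partial(x_i)=0$ on the nose), not merely ``at leading order,'' and no descending induction on action is needed at the top level.

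There is also a concrete false step in your descent: the claim that the leading part of $z=\sum_k u_k(\partial x_{i_k})v_k$ agrees with that of $\partial\bigl(\sum_k\pm u_kx_{i_k}v_k\bigr)$. The Leibniz expansion of the latter contains the extra terms $(\partial u_k)x_{i_k}v_k$ and $\pm u_kx_{i_k}(\partial v_k)$, in which the generator $x_{i_k}$ appears \emph{undifferentiated}; since $\nu(x_{i_k})>\nu(\partial x_{i_k})$, these terms can have action strictly larger than $\nu(z)$ (compare $\nu(\partial u_k)+\nu(x_{i_k})$ with $\nu(u_k)+\nu(\partial x_{i_k})$: neither dominates in general). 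So subtracting that boundary need not decrease the action of the remaining cycle, and your induction does not close. To repair the argument you would essentially have to prove the paper's Proposition \ref{prp:main}, i.e.\ that every left ideal generated by boundaries in a DGA with action filtration is freely generated by boundaries.
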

The proof of the main theorem relies heavily on the fact that the DGA has an action filtration respected by the differential. It is not clear to the author if this condition can be omitted.

As an immediate consequence, we obtain the following useful result.
\begin{corollary}
\label{cor:nontrivial}
A DGA with action filtration is acyclic if and only if its characteristic algebra is trivial.
\end{corollary}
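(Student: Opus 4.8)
The plan is to prove the two implications separately, noting that all the real work is already contained in Theorem~\ref{thm:main} and that only one of the two directions actually invokes it.

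First I would dispatch the direction ``acyclic $\Rightarrow$ trivial characteristic algebra'', which is elementary and uses neither the filtration nor the main theorem. Acyclicity of $(\mathcal{A},\partial)$ means precisely that $H(\mathcal{A},\partial)=0$; since the unit $1$ is always a cycle (as $\partial 1 = 0$), acyclicity forces $1$ to be a boundary, say $1=\partial(x)$ for some $x\in\mathcal{A}$. But then $1 = 1\cdot\partial(x)\cdot 1$ lies in the two-sided ideal $\mathcal{A}\partial(\mathcal{A})\mathcal{A}$, so this ideal is all of $\mathcal{A}$, and hence $\mathcal{C}=\mathcal{A}/\mathcal{A}\partial(\mathcal{A})\mathcal{A}=0$ is trivial.

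For the converse ``trivial characteristic algebra $\Rightarrow$ acyclic'', I would feed the hypothesis $\mathcal{C}=0$ directly into Theorem~\ref{thm:main}. That theorem provides an injective unital algebra map $H(\mathcal{A},\partial)\to\mathcal{C}$. When the target is the zero ring, every element of $H(\mathcal{A},\partial)$ is sent to $0$, and injectivity then forces the source to consist of the single element $0$ as well; that is, $H(\mathcal{A},\partial)=0$, which is exactly acyclicity.

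The point to emphasise is that there is no genuine obstacle left at the level of the corollary: the substance has been front-loaded into Theorem~\ref{thm:main}, and in particular into its use of the action filtration together with Cohn's weak division algorithm. The only conceptual content of the deduction is the observation that an injection into the zero ring has trivial domain; without the injectivity statement of Theorem~\ref{thm:main}, the vanishing of $\mathcal{C}$ would give no control over $H(\mathcal{A},\partial)$, since a priori nontrivial homology classes could become identified with boundaries upon passing to the characteristic algebra.
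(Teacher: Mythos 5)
Your proof is correct and matches the paper's intent exactly: the paper states the corollary as an immediate consequence of Theorem~\ref{thm:main} without further argument, and your two directions (the unit becoming a boundary in the acyclic case, and injectivity into the zero ring forcing $H(\mathcal{A},\partial)=0$ in the converse) are precisely the expected deduction.
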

We note that, in its full generality, \cite[Theorem 1.6]{MyCaps} by the author depends on this result, which was stated as \cite[Lemma 3.1]{MyCaps} in the same article, but appeared there without a proof.

\subsection{Results in the super-commutative case}
In the following we will investigate what can be said in the case of a {\bf super-commutative} DGA. In other words, a DGA as defined in Section \ref{sec:setup}, but where we have imposed the commutativity relation
\[ a\cdot b=(-1)^{|a||b|}b \cdot a.\]
Clearly, any element $b$ of odd degree satisfies $b^2=0$ unless the ground field $\kk$ is of characteristic two.

In general, the statement analogous to Theorem \ref{thm:main} is not satisfied for a super-commutative DGA; see Example \ref{ex:comm} below. However, it is not difficult to establish the following result which is analogous to Corollary \ref{cor:nontrivial}.
\begin{proposition}
\label{prp:comm}
Let $(\mathcal{A},\partial)$ be a super-commutative DGA over the ground field $\kk$ for which either:
\begin{enumerate}
\item The field satisfies $\OP{char} \kk \neq 2$, and the grading is taken in the group $\ZZ/2\ZZ$; or
\item The field satisfies $\OP{char} \kk = 2$, and the grading is arbitrary.
\end{enumerate}
It follows that the characteristic algebra $\mathcal{A}/\mathcal{A}\partial(\mathcal{A})\mathcal{A}$ is trivial if and only if the DGA is acyclic.

In the first case, the commutative unital algebra defined as the quotient $\mathcal{C}/\mathcal{C}^{\OP{odd}}$, where $\mathcal{C}^{\OP{odd}} \subset \mathcal{C}$ denotes the two-sided ideal generated by the elements in odd degree is, moreover, trivial if and only if the DGA is acyclic.
\end{proposition}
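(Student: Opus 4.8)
Write $I=\mathcal{A}\partial(\mathcal{A})\mathcal{A}$, so $\mathcal{C}=\mathcal{A}/I$. By super-commutativity $I=\partial(\mathcal{A})\mathcal{A}$ is just the ordinary ideal generated by the boundaries, and since $\partial(\partial(a)b)=\pm\,\partial(a)\partial(b)\in I$ it is a differential ideal, so $\partial$ descends to the zero map on $\mathcal{C}$. The two ``only if'' directions are then immediate and formal: if $(\mathcal{A},\partial)$ is acyclic then $1=\partial(x)$ for some $x$, whence $1\in I$ and $\mathcal{C}$ is the zero ring; a fortiori the further quotient $\mathcal{C}/\mathcal{C}^{\OP{odd}}$ vanishes as well. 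All the content lies in the converse implications, and the plan is to funnel both of them into the single statement ``$\mathcal{C}=0\Rightarrow(\mathcal{A},\partial)$ is acyclic'', i.e. $1\in I\Rightarrow 1\in\OP{im}\partial$.

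First I would dispose of the passage from $\mathcal{C}/\mathcal{C}^{\OP{odd}}$ to $\mathcal{C}$ in case (1). Let $J\subset\mathcal{A}$ be the two-sided ideal generated by the odd-degree elements. Since $\OP{char}\kk\neq2$ every odd generator squares to zero, and as there are finitely many of them any product of sufficiently many odd elements vanishes; hence $J$ is nilpotent, and so is its image $\overline{J}\subset\mathcal{C}$. Because $\mathcal{C}^{\OP{odd}}=\overline{J}$, the hypothesis $\mathcal{C}/\mathcal{C}^{\OP{odd}}=0$ says exactly that the unit of $\mathcal{C}$ lies in the nilpotent ideal $\overline{J}$; a nilpotent unit is zero, so in fact $\mathcal{C}=0$ already. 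Thus the second assertion of the Proposition reduces to the first, and it remains only to prove the crux $1\in I\Rightarrow 1\in\OP{im}\partial$.

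For the crux the key mechanism is a null-homotopy of the unit built from a geometric series. From $1\in I$ one first extracts, using the algebra map that sends every generator to $0$, an element $u$ with $\partial u=1+h$, where $h$ lies in the augmentation ideal (the span of the monomials of positive action) and is automatically a cycle, $\partial h=\partial(\partial u-1)=0$. Setting $s=\sum_{j=0}^{m}(-h)^{j}$, the relation $\partial h=0$ forces $\partial s=0$, so
\[\partial(u\,s)=(\partial u)\,s=(1+h)\,s=1-(-h)^{m+1}.\]
Hence $1$ is a boundary as soon as $(-h)^{m+1}=0$, i.e. as soon as $h$ is nilpotent. In case (1) this can be arranged honestly: the reduction above gives $\mathcal{C}=0$, hence $\mathcal{C}/\mathcal{C}^{\OP{odd}}=0$, which unwinds to $1\in(\,\overline{\partial y_1},\dots,\overline{\partial y_s}\,)$ in the reduced even algebra $\mathcal{A}/J$ (a polynomial ring). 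Lifting a witnessing relation $1=\sum_j q_j\,\overline{\partial y_j}$ to $u=\sum_j \tilde q_j\,y_j\in\mathcal{A}$ yields $\overline{\partial u}=1$ in $\mathcal{A}/J$, so that $h=\partial u-1$ maps to $0$ in the reduced ring and is therefore nilpotent. With this $u$ the displayed series terminates and $1=\partial(u\,s)\in\OP{im}\partial$, completing case (1).

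The remaining point, and the step I expect to be the main obstacle, is the same crux in case (2), where $\OP{char}\kk=2$, the underlying algebra is an honest polynomial ring, and nothing is nilpotent: here $h$ need not be a nilpotent, the series $s$ genuinely lives only in the completion of $\mathcal{A}$ with respect to the action filtration, and one must show that the resulting primitive of $1$ can be taken inside $\mathcal{A}$ itself. This is exactly where the action filtration is indispensable. I would exploit that the filtration is exhaustive with finite-dimensional pieces $F^{\le a}\mathcal{A}$, together with its triangular structure --- each $\partial(g_i)$ is supported in strictly lower action than $g_i$ --- to run an induction on the finite set of generators ordered by action. At each stage one either restricts to the sub-DGA generated by the lower-action generators, where $\mathcal{C}$ is again trivial and the inductive hypothesis applies, or one encounters a top generator $g$ whose boundary is a unit modulo the characteristic ideal of that sub-DGA; the latter situation again produces a null-homotopy of $1$ whose convergence is controlled, and hence truncated into $\mathcal{A}$, by the action filtration. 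Making this truncation rigorous --- ruling out the cancellation by which a low-action element of $I$ could hide behind a high-action expression --- is the technical heart, and would be handled by the same action-filtration bookkeeping that underlies the proof of Theorem \ref{thm:main}, only without the need for Cohn's weak division algorithm.
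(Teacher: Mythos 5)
Your treatment of case (1) is correct and in fact takes a genuinely different route from the paper's: the observation that the ideal $J$ generated by the odd elements is nilpotent (each of the finitely many odd generators squares to zero when $\OP{char}\kk\neq 2$) both reduces the $\mathcal{C}/\mathcal{C}^{\OP{odd}}$ assertion to the $\mathcal{C}$ assertion and, via the terminating geometric series $\partial(us)=(1+h)s=1$ with $h\in J$, produces an explicit primitive of $1$. The paper instead massages the relation $1=\sum_i x_i\partial(y_i)$ with the Leibniz rule into $1-\partial(w)=\sum_i u_i\partial(v_i)$ with all factors odd and then squares both sides. Both are legitimate; your version arguably isolates the role of $\OP{char}\kk\neq 2$ more transparently.

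The gap is case (2). There you stop at a plan --- an induction over generators ordered by action, a truncation of a non-terminating series controlled by the filtration --- and you yourself flag the truncation step as an unproved ``technical heart''. That step is not routine bookkeeping: in characteristic $2$ the algebra is an honest polynomial ring, $h$ is a non-nilpotent element of positive action, no finite truncation of $\sum_j(-h)^j$ is a cycle, and the homotopy $us$ simply does not exist in $\mathcal{A}$; one needs a different primitive altogether, not a sharper estimate. The paper's actual argument requires neither the action filtration nor any induction: square the relation $1=\sum_i x_i\partial(y_i)$; in characteristic $2$ the cross terms cancel in pairs, giving $1=\sum_i x_i^2\partial(y_i)^2$; since $\partial(x^2)=2x\partial(x)=0$ every square is a cycle, and the Leibniz rule yields $\partial\bigl(x_i^2y_i\partial(y_i)\bigr)=x_i^2\partial(y_i)^2$, so that $1=\partial\bigl(\sum_i x_i^2y_i\partial(y_i)\bigr)$ is a boundary. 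Without this squaring trick or some substitute, your proposal does not establish the proposition in case (2).
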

\begin{proof}
Suppose that the characteristic algebra is trivial, which implies that
\[ 1=x_1 \partial(y_1) + \hdots + x_n\partial(y_n)\]
for elements $x_i,y_i \in \mathcal{A}$, $i=1,\hdots,n$, which all may be assumed to be of homogeneous degrees.

(1): Recall that the differential $\partial$ is of degree $-1$, that $|1|=0$, and that
\[ (-1)^{|x_i|}x_i\partial(y_i)+ (-1)^{|y_i|(|x_i|-1)}y_i\partial(x_i)=\partial(x_iy_i).\]
Using these relations, may write
\[ 1=u_1 \partial(v_1) + \hdots + u_n\partial(v_n)+\partial(w),\]
for elements $u_i,w_i \in \mathcal{A}$ satisfying
\[|u_i|=|\partial(w_i)|=1 \in \ZZ/2\ZZ, \:\:i=1,\hdots,n.\]
The equation
\[ 1-\partial(w)=u_1 \partial(v_1) + \hdots + u_n\partial(v_n)\]
implies that
\[ (1-\partial(w))^2=(u_1 \partial(v_1) + \hdots + u_n\partial(v_n))^2=0\]
is satisfied by degree reasons, from which it follows that
\[ 1=\partial(2w-w\partial(w)).\]
Hence, $(\mathcal{A},\partial)$ is acyclic as sought.

In the same manner, we also see that $1 \in \mathcal{A}$ is contained in the two-sided ideal generated by the odd elements if and only if $(\mathcal{A},\partial)$ is acyclic.

(2): In the case when $\OP{char} \kk=2$ we can square both sides of the first equation above, giving rise to the relation
\[ 1=(x_1 \partial(y_1) + \hdots + x_n\partial(y_n))^2=x_1^2\partial(y_1)^2+\hdots+x_n^2\partial(y_n)^2.\]
Since every square is a cycle in this characteristic, i.e.
\[\partial(x^2)=x\partial(x)+\partial(x)x=0\]
by commutativity, the Leibniz rule again implies that
\[ 1=\partial(x_1^2y_1\partial(y_1))+\hdots+\partial(x_n^2y_n\partial(y_n))\]
is a boundary.
\end{proof}

\begin{corollary}
Given that $(\mathcal{A},\partial)$ is of one of the forms as prescribed by Proposition \ref{prp:comm}, it follows that there exists an augmentation into a field $\FF \supset \kk$, i.e.~a unital DGA morphism
\[ \varepsilon \colon (\mathcal{A},\partial) \to (\FF,0) \]
considered as a unital DGA with an empty generating set, if and only if $(\mathcal{A},\partial)$ is not acyclic.
\end{corollary}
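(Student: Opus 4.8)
The plan is to identify augmentations into fields with unital algebra homomorphisms out of the characteristic algebra, and then to invoke Proposition \ref{prp:comm} together with the elementary fact that every nonzero commutative unital ring admits a quotient field.

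First I would observe that, since the target $(\FF,0)$ carries the zero differential, a unital DGA morphism $\varepsilon\colon(\mathcal{A},\partial)\to(\FF,0)$ is precisely a unital $\kk$-algebra homomorphism $\varepsilon\colon\mathcal{A}\to\FF$ satisfying $\varepsilon\circ\partial=0$. As $\varepsilon$ is multiplicative, it then annihilates the entire two-sided ideal $\mathcal{A}\partial(\mathcal{A})\mathcal{A}$, and hence factors through the characteristic algebra as $\mathcal{A}\to\mathcal{C}\xrightarrow{\bar\varepsilon}\FF$. Conversely, any unital algebra homomorphism $\mathcal{C}\to\FF$ into a field pulls back to such an augmentation, since boundaries are already sent to zero in $\mathcal{C}$. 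Thus augmentations into fields $\FF\supset\kk$ correspond exactly to unital algebra homomorphisms from $\mathcal{C}$ into such fields.

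For the forward implication, namely that existence of an augmentation forces non-acyclicity, I note that the unitality of $\bar\varepsilon\colon\mathcal{C}\to\FF$ gives $1_{\mathcal{C}}\mapsto 1_{\FF}\neq 0$, whence $\mathcal{C}\neq 0$. By the first part of Proposition \ref{prp:comm} this already shows that $(\mathcal{A},\partial)$ is not acyclic.

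For the reverse implication I would construct an augmentation whenever the DGA is not acyclic, and here the two cases diverge. In case (2) the super-commutativity relation reduces, in characteristic two, to ordinary commutativity, so $\mathcal{C}$ is a nonzero commutative unital $\kk$-algebra; choosing a maximal ideal $\mathfrak{m}\subset\mathcal{C}$ produces a field $\FF:=\mathcal{C}/\mathfrak{m}\supset\kk$, and the composite $\mathcal{A}\to\mathcal{C}\to\FF$ is the sought augmentation. The main obstacle is case (1), where $\mathcal{C}$ is genuinely super-commutative and may carry a nonzero odd part, so one cannot directly pass to a quotient field; indeed any homomorphism to a field must annihilate the odd elements, which square to zero when $\OP{char}\kk\neq 2$. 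This is exactly where the second part of Proposition \ref{prp:comm} enters: the \emph{commutative} quotient $\mathcal{C}/\mathcal{C}^{\OP{odd}}$ is nonzero precisely because $(\mathcal{A},\partial)$ is not acyclic, so I may again select a maximal ideal of $\mathcal{C}/\mathcal{C}^{\OP{odd}}$ and take the composite $\mathcal{A}\to\mathcal{C}\to\mathcal{C}/\mathcal{C}^{\OP{odd}}\to\FF$ onto the resulting quotient field. In either case the inclusion $\kk\hookrightarrow\FF$ is automatic, as $\kk$ is a field mapping unitally, hence injectively, into $\FF$.
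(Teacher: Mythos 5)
Your proposal is correct and follows essentially the same route as the paper: identify augmentations with unital algebra maps out of $\mathcal{C}$, then use Proposition \ref{prp:comm} to get a nonzero \emph{commutative} quotient ($\mathcal{C}$ itself in case (2), $\mathcal{C}/\mathcal{C}^{\OP{odd}}$ in case (1)) and pass to a residue field of a maximal ideal. You merely spell out the maximal-ideal step and the forward implication, which the paper leaves implicit.
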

\begin{proof} The existence of an augmentation is equivalent to the existence of a unital algebra map $\mathcal{C} \to \FF$ from the characteristic algebra to a field. Such a map exists since $\mathcal{C}$ admits a unital algebra map to a \emph{commutative} unital algebra in both of the cases covered by Proposition \ref{prp:comm}.
\end{proof}

\begin{example}
\label{ex:comm}
Consider the super-commutative DGA $(\mathcal{A}=\langle b,b_1,b_2,c \rangle,\partial)$ over $\QQ$ generated by one generator $c$ of degree $|c|=-1$, and three  generators $b_1,b_2,b$ of degree $|b_1|=|b_2|=|b|=1$. We prescribe the relations
\begin{eqnarray*}
& & \partial(b_1)=bc,\\
& & \partial(b_2)=bc,
\end{eqnarray*}
while the other generators are cycles. It immediately follows that $\partial^2=0$. Moreover, $\partial(b_1)b_2=bcb_2$ is a cycle which is not a boundary, but whose image inside the characteristic algebra $\mathcal{C}=\mathcal{A}/\mathcal{A}\partial(\mathcal{A})\mathcal{A}$ clearly vanishes.

In order to see that $bcb_2$ is not a boundary, we argue as follows. First, observe that any word of length $5$ or more automatically vanishes in this DGA. Second, we consider the computations
\begin{eqnarray*}
& & \partial(b_1b_2)=bcb_2-b_1bc,\\
& & \partial(b_ib_i)=0,\\
& & \partial(b_ib)=0,\\
& & \partial(b_ic)=0,\\
& & \partial(bc)=0,\\
& & \partial(b_ib_jb)=0,\\
& & \partial(b_ib_jc)=0,\\
& & \partial(b_ib_jbc)=0,
\end{eqnarray*}
for any $i,j \in \{1,2\}$.
\end{example}

\subsection{A brief introduction to the geometric setup in which our DGAs arise}
\label{sec:contact}
The Chekanov-Eliashberg algebra associated to a Legendrian knot of a contact manifold, as introduced independently in \cite{DiffAlg} by Chekanov and \cite{IntroSFT} by Eliashberg-Givental-Hofer, is in the basic geometric setup a natural example of a finitely generated semifree DGA. Given a Legendrian submanifold, the theory associates to it the so-called Chekanov-Eliashberg algebra $(\mathcal{A},\partial)$ over a field $\kk$. The differential $\partial$ is defined by a count of associated rigid pseudoholomorphic polygons. The homotopy type of this DGA has been shown to be a powerful Legendrian isotopy invariant. An important algebraic feature of this DGA, due to this geometric setup, is that the differential respects an action filtration. This will later turn out to be important.

The basic case where the technical details of Legendrian contact homology has been carried out is that for a Legendrian submanifold $\Lambda \subset (P \times \RR,dz+\theta)$ of a contactisation of a $2n$-dimensional Liouville manifold $(P,d\theta)$; see \cite{DiffAlg} and \cite{ContHomP}. A Liouville manifold is a particular exact symplectic manifold $(P,d\theta)$, where the latter is a pair consisting of a non-compact smooth $2n$-dimensional manifold $P$ together with an exact two-form $d\theta$ satisfying the property that $d\theta^{\wedge n}$ is a volume form on $P$. Recall that a submanifold of a $(2n+1)$-dimensional contactisation as above is Legendrian given that it is of dimension $n$, and that the contact one-form $dz+\theta$ vanishes along it.
\begin{example} The standard contact $(2n+1)$-space $(\RR^{2n+1},dz-(y_1dx_1 + \hdots + y_ndx_n))$ is the archetypal example, as well as a local model, of a contact manifold.
\end{example}
The study of Legendrian submanifolds up to Legendrian isotopy, i.e.~smooth isotopy through Legendrian submanifolds, has been shown to be a both subtle and rich field; see e.g.~\cite{DiffAlg}, \cite{Computable}, \cite{NonIsoLeg}, and \cite{Sivek:ContactHomology}, among others. The Chekanov-Eliashberg algebra is the main invariant used, which is considerably more powerful than classical topological invariants such as the rotation number and Thurston-Bennequin invariant. More precisely, for a Legendrian submanifold of a contact manifold as above, the DGA-homotopy type, and even the so-called stable-tame isomorphism type, of the Chekanov-Eliashberg algebra is invariant under Legendrian isotopy \cite{ContHomP}.

Since a semifree  DGA typically is an infinite-dimensional noncommutative algebra, it is in general not easily studied. For that reason one usually tries to derive finite-dimensional invariants from it. For instance, given an augmentation of the DGA, i.e.~a unital DGA morphism $(\mathcal{A},\partial) \to (\kk,0)$, one can use Chekanov's linearisation procedure in \cite{DiffAlg} to produce a finite-dimensional complex. The set of isomorphism classes of the homologies of all linearisations is an invariant of the DGA up to DGA homotopy.

The above linearisation procedure produces computable Legendrian isotopy invariants. However, far from all interesting Legendrian submanifolds have Chekanov-Eliashberg algebras admitting augmentations. On one hand, as shown by Proposition \ref{prp:acyclic}, there is a unique acyclic Chekanov-Eliashberg algebra up to stable tame isomorphism. On the other hand, being non-acyclic is a necessary but clearly not a sufficient condition for admitting an augmentation.

In \cite{Computable} NG introduced the so-called {\bf characteristic algebra} associated to a DGA $(\mathcal{A},\partial)$, namely the quotient $\mathcal{C}:=\mathcal{A}/\mathcal{A}\partial(\mathcal{A})\mathcal{A}$ under the two-sided ideal generated by the boundaries. In the same article, he also successfully used this algebra in order to obtain invariants from the DGA in cases when there are no augmentations.

Augmentations clearly factorise through the characteristic algebra, which can be considered as the ``universal'' (not necessarily commutative) augmentation $(\mathcal{A},\partial) \to (\mathcal{C},0)$. The first important question that now arises is under what conditions the characteristic algebra is nontrivial, which is what we study here.

For more details and applications concerning augmentations in the characteristic algebra, we refer to \cite{MyCaps} and \cite{EstimNumbrReebChordLinReprCharAlg} due to the author and the author together with R. Golovko, respectively. We also refer to \cite{Sivek:ContactHomology} for computations of Chekanov-Eliashberg algebras of interesting Legendrian knots due to S. Sivek. For instance, examples of Legendrian knots are produced for which the characteristic algebra admits a two-dimensional but no one-dimensional representation, as well as knots for which the characteristic algebra admits no finite-dimensional representations.

\section{Results concerning acyclic DGAs}
\label{sec:acyclic}

First we show the basic result that the stable-tame isomorphism class of an acyclic DGA does not contain any interesting information.
\begin{proposition} \label{prp:acyclic}
Consider two DGAs, where each DGA has a generator whose boundary is equal to the unit. Two such acyclic DGAs are tame isomorphic if and only if there is a degree-preserving bijection between their generators.
\end{proposition}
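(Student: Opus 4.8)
The statement is an equivalence, and one direction is essentially formal. Recall that a tame isomorphism is by definition a composition of elementary automorphisms---each fixing all but one generator $a$ and sending $a \mapsto a + w$ for a word $w$ not involving $a$---together with a ``simple'' isomorphism induced by a degree-preserving relabeling of the generating set. All of these operations preserve the number of generators in each degree, so a tame isomorphism induces a degree-preserving bijection of the generating sets. This settles the forward implication, and the entire content of the proposition lies in the converse.

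For the converse my plan is to reduce every DGA of the prescribed type to a canonical form depending only on the degrees of its generators. Let $e$ denote the distinguished generator with $\partial e = 1$; since $\partial$ lowers degree by one and $|1|=0$, necessarily $|e|=1$. The key step I would prove is the following normal-form statement: $(\mathcal{A},\partial)$ is tame isomorphic to the DGA on the same generators in which $\partial e = 1$ and every other generator is a cycle. Granting this, two DGAs whose generators are in degree-preserving bijection are each tame isomorphic to the same normal form---matching the two distinguished generators $e, e'$, both of degree $1$, and pairing the remaining generators degree by degree---and these two normal forms differ only by a degree-preserving relabeling. Composing, the two original DGAs are tame isomorphic.

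The normal form is produced using the contracting homotopy supplied by $e$. By the Leibniz rule, any cycle $z$ satisfies $\partial(ze) = \partial(z)e + (-1)^{|z|}z\,\partial(e) = (-1)^{|z|}z$, so that $z=(-1)^{|z|}\partial(ze)$ is a boundary. Applying this to $z=\partial(a)$ for a generator $a \neq e$, the element
\[ b := a - (-1)^{|a|-1}\partial(a)\,e \]
is a cycle. The substitution fixing all generators except $a$ and sending $a \mapsto b$ is an elementary automorphism precisely when the correction term $\partial(a)\,e$ does not involve $a$, and here the hypothesis that $\partial$ respects an action filtration (condition \ref{F}) is essential: since $\partial$ strictly decreases action and the generators carry positive action, no word appearing in $\partial(a)$ can contain $a$, nor indeed any generator of action $\geq$ that of $a$. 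Carrying out this correction for each generator $a \neq e$ replaces it by a cycle through elementary automorphisms, and the resulting DGA is the claimed normal form; it is acyclic by Corollary \ref{cor:nontrivial} since $1 = \partial e$.

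The step I expect to require the most care is organizing these corrections into a single tame automorphism: correcting one generator changes the differentials of the others, so one must ensure the procedure terminates and does not undo earlier cancellations. I would handle this by processing the generators $a \neq e$ in order of increasing action, exploiting the fact that $\partial(a)$ involves only the fixed generator $e$ and non-$e$ generators of strictly smaller action. This makes the system of substitutions triangular relative to $e$, so that once a generator has been turned into a cycle at a given action level it is untouched by the later corrections of strictly higher action. This triangularity argument is the technical heart of the proof, and it is also the precise point at which the action filtration cannot be dispensed with, since the grading alone does not prevent $a$ from occurring in $\partial(a)$.
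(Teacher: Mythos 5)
Your proof is correct and follows essentially the same route as the paper's: both use the distinguished generator with boundary equal to the unit as a contracting homotopy, applying elementary automorphisms of the form $a \mapsto a \pm (\text{correction involving }\partial(a)\text{ and the distinguished generator})$ to turn every other generator into a cycle, thereby reducing both DGAs to the same normal form. Your version is in fact more careful than the paper's one-line argument, since you verify the signs, check that the correction term avoids $a$ (via the action filtration), and address the ordering of the substitutions.
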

\begin{proof}
Let $b$ be a generator for which $\partial(b)=1$, and let $a$ be any other generator. The elementary automorphism $\Phi$ defined by $a \mapsto a + b\partial(a)$ satisfies the property that $\Phi \circ \partial \circ \Phi^{-1}(a)=0$. After applying a suitable tame isomorphism, we may thus assume that all generators except $b$ are cycles in both DGAs considered.
\end{proof}
\begin{corollary}
Two acyclic DGAs $(\mathcal{A}_i=\langle a_1,\hdots,a_{k_i},b_1,\hdots,b_{l_i}\rangle,\partial_i)$, $i=0,1$, with generators $a_j$ and $b_j$ in even and odd degrees, respectively, are stable-tame isomorphic if and only if $l_1-k_1=l_0-k_0$.
\end{corollary}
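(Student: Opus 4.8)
The plan is to recognise $l-k$ as (minus) the Euler characteristic of the graded $\kk$-vector space spanned by the generators, and to prove that this Euler characteristic is a complete invariant of acyclic DGAs up to stable-tame isomorphism. Throughout I would use Proposition \ref{prp:acyclic}: since acyclicity means that $1$ is a boundary, after a preliminary tame isomorphism each acyclic DGA may be assumed to be in the normal form in which a single distinguished odd generator $b$ satisfies $\partial b = 1$ while every other generator is a cycle; in this normal form two such DGAs are tame isomorphic exactly when their generators admit a degree-preserving bijection, i.e.\ exactly when the two families have the same number of generators in each degree. For a finitely supported function $n \colon \ZZ \to \ZZ_{\geq 0}$ recording the number of generators in each degree, with $\delta_d$ the function having a single generator in degree $d$, I would write $\chi(n) := \sum_d (-1)^d n_d$, so that $\chi(n) = k - l = -(l-k)$.

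Necessity (the ``only if'') I would dispose of first. I would recall that a single stabilisation adjoins two new generators in consecutive degrees $d$ and $d+1$ together with a differential sending the higher generator to the lower one; as consecutive degrees have opposite parity, each stabilisation adjoins exactly one even and one odd generator and hence leaves $l-k$ unchanged. A tame isomorphism between acyclic DGAs preserves the full degree distribution of the generators by Proposition \ref{prp:acyclic}, and so in particular preserves $l-k$. Consequently, if the two DGAs become tame isomorphic after stabilisation, then $l_0-k_0$ and $l_1-k_1$ both equal the common value of $l-k$ of the two stabilised DGAs, giving $l_1-k_1=l_0-k_0$.

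For sufficiency (the ``if'') I would show that, when $\chi(n^{(0)}) = \chi(n^{(1)})$, the two DGAs can be stabilised until their generator degree distributions coincide, after which Proposition \ref{prp:acyclic} furnishes a tame isomorphism. The crux is a combinatorial observation: each stabilisation in degree $d$ adds to the distribution the pair vector $v_d := \delta_d + \delta_{d+1}$, every $v_d$ lies in $\ker\chi$, and, conversely, clearing supports from left to right shows that the integer span of the $v_d$ is exactly $\ker\chi$. Setting $D := n^{(0)} - n^{(1)}$, the hypothesis $\chi(n^{(0)}) = \chi(n^{(1)})$ gives $D \in \ker\chi$, hence $D = \sum_d c_d v_d$ for integers $c_d$. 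Splitting $c_d = c_d^+ - c_d^-$ into non-negative parts and putting $p^{(1)} := \sum_d c_d^+ v_d$ and $p^{(0)} := \sum_d c_d^- v_d$ yields two non-negative combinations of pair vectors with $n^{(0)} + p^{(0)} = n^{(1)} + p^{(1)}$. Realising $p^{(0)}$ and $p^{(1)}$ as the corresponding sequences of stabilisations of the two DGAs produces stabilised acyclic DGAs with identical generator degree distributions, which are then tame isomorphic by Proposition \ref{prp:acyclic}.

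The main obstacle I anticipate is exactly the non-negativity issue lurking in the last step: it is genuinely false that an arbitrary non-negative distribution with vanishing alternating sum is a non-negative combination of the $v_d$ (for instance the distribution with one generator in each of degrees $0$ and $3$ is not), so one cannot hope to stabilise only one of the two DGAs. What rescues the argument is that I am free to stabilise \emph{both} sides; this replaces the rigid question ``is $D$ a non-negative pair-sum?'' by the weaker and always-solvable question ``is $D$ an integral combination of the $v_d$?'', which is governed cleanly by the single linear functional $\chi$. I would flag at the outset that it is acyclicity that permits entry into the normal form of Proposition \ref{prp:acyclic}, legitimising all of the degree-distribution bookkeeping above.
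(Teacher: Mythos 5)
Your overall strategy (put both DGAs into the normal form of Proposition \ref{prp:acyclic}, then stabilise both sides until the generator degree distributions coincide) matches the paper's, and your combinatorial argument that the pair vectors $\delta_d+\delta_{d+1}$ span the kernel of the alternating-sum functional is in effect a proof of Lemma \ref{lem:stableiso}, which the paper merely cites as standard. However, there is a genuine gap at your very first step: you assert that a \emph{preliminary tame isomorphism} brings every acyclic DGA into the normal form in which some \emph{generator} satisfies $\partial b=1$. This is false. Acyclicity only provides an \emph{element} $x$ with $\partial(x)=1$, and $x$ need not have the shape $ra_{i_0}+A$ with $A$ free of $a_{i_0}$ required of an elementary automorphism. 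The paper's own Example \ref{ex:acyclic} is a counterexample: its second DGA is acyclic with $b_1-a_1b_1a_2+b_2a_2^2\in\partial^{-1}(1)$, yet the proposition following that example shows it is not even isomorphic to the first DGA, which has the identical graded generating set and \emph{is} in normal form; if a tame isomorphism could normalise the second DGA, Proposition \ref{prp:acyclic} would force the two to be tame isomorphic, a contradiction. So the step "enter normal form by a tame isomorphism alone" can fail, and everything downstream depends on it.

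The repair is cheap and is exactly the paper's opening move: first take the free product with one stabilisation $(\mathcal{S}_1,\partial_{\mathcal{S}_1})$, which produces a fresh \emph{closed} degree-$1$ generator $a$, and then apply the elementary automorphism $a\mapsto a-x$, after which the conjugated differential sends the new generator $a$ to $1$. Since a stabilisation adjoins one even and one odd generator, this extra step does not disturb $l-k$, so the rest of your bookkeeping goes through unchanged. With that single correction your argument is complete, and indeed somewhat more explicit than the paper's, which delegates the degree-matching combinatorics to the unproved Lemma \ref{lem:stableiso}.
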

\begin{proof} Consider an acyclic DGA $(\mathcal{A},\partial)$ and choose an element $x \in \mathcal{A}$ of degree $1$ satisfying $\partial(x)=1$. After taking the free product with a stabilisation $(\mathcal{S}_1,\partial_{\mathcal{S}_1})$ in degree $1$ (see Section \ref{sec:stabletame}), we find generators $a$ and $b$ of degrees $1$ and $2$, respectively, for which $\partial(a)=0$. Observe that such a free product does not affect the difference between the number of odd and even generators. After the elementary automorphism $\Phi$ determined by $a \mapsto a-x$, the differential satisfies $1=\Phi \circ \partial \circ \Phi^{-1}(a)$ for the new generator $a$.

Applying this argument to the DGAs $(\mathcal{A}_i,\partial_i)$, $i=0,1$, we may assume that they both have a generator whose boundary is equal to $1 \in \mathcal{A}_i$. The above proposition combined with Lemma \ref{lem:stableiso} finally implies the existence of the sought stable-tame isomorphism.
\end{proof}
The following lemma is standard.
\begin{lemma}
\label{lem:stableiso}
Two DGAs $(\mathcal{A}_i=\langle a_1,\hdots,a_{k_i},b_1,\hdots,b_{l_i}\rangle,\partial_i)$, $i=0,1$, with generators $a_j$ and $b_j$ in even and odd degrees, respectively, are stable-tame isomorphic \emph{as graded algebras} (i.e.~forgetting the differential) if and only if $l_1-k_1=l_0-k_0$.
\end{lemma}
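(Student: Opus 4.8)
The plan is to forget the differential entirely and reduce the statement to bookkeeping with generators graded by degree, since each $\mathcal{A}_i$ is then just a free graded algebra. Recall from the computation in the preceding corollary that a stabilisation in degree $j$ contributes, as a graded algebra, exactly one free generator in degree $j$ and one in degree $j+1$. Write $e^{(i)}_d$ for the number of generators of $\mathcal{A}_i$ in degree $d$, so that $\sum_d e^{(i)}_d = k_i+l_i$ while the alternating sum satisfies $\sum_d (-1)^d e^{(i)}_d = k_i-l_i$. The key reduction is that two free graded algebras having the same $e_d$ in every degree are tamely isomorphic, via the trivial isomorphism given by any degree-preserving bijection of their generating sets; thus the whole statement follows once I understand how the collection $(e_d)$ can be altered by stabilisation.

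For the ``only if'' direction I would note that a tame isomorphism preserves each $e_d$, and that a single stabilisation in degree $j$ changes the alternating sum $\sum_d(-1)^d e_d$ by $(-1)^j+(-1)^{j+1}=0$. Hence $l_i-k_i$ is invariant under both operations, so it is a stable-tame invariant and must agree for the two algebras. For the ``if'' direction, assume $l_1-k_1=l_0-k_0$ and set $f_d:=e^{(1)}_d-e^{(0)}_d$, a finitely supported integer sequence. I want to choose, for each $i$ and each degree $j$, a nonnegative number $s^{(i)}_j$ of stabilisations in degree $j$ so that the two stabilised algebras acquire the same generator count in every degree, i.e.\ so that $f_d=(s^{(0)}_d+s^{(0)}_{d-1})-(s^{(1)}_d+s^{(1)}_{d-1})$ for all $d$. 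Writing $t_d:=s^{(0)}_d-s^{(1)}_d$, this becomes the recurrence $t_d+t_{d-1}=f_d$, which I solve from a degree below the support of all generators with initial value $0$, obtaining $t_d=(-1)^d\sum_{j\le d}(-1)^j f_j$.

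The crucial point, and the only place where the hypothesis is used, is that this solution is again finitely supported: for $d$ above the support one has $t_d=(-1)^d\sum_j(-1)^j f_j=(-1)^d\bigl((k_1-l_1)-(k_0-l_0)\bigr)=0$, precisely because $l_1-k_1=l_0-k_0$. Since $(t_d)$ then has finite support, I can realise it by honest nonnegative stabilisation multiplicities: choosing $M$ larger than $\max_d(-t_d)$ and setting $s^{(0)}_d=t_d+M$, $s^{(1)}_d=M$ over a bounded interval of degrees containing the support of $(t_d)$ and $(f_d)$ keeps both counts nonnegative while preserving the difference $t_d$, and adding the same $M$ stabilisations of each type to both algebras does not disturb the equality of their degreewise generator counts. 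After these stabilisations the two algebras have identical $e_d$ in every degree, hence are tamely isomorphic by the reduction of the first paragraph.

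I expect the verification that the recurrence terminates, that is, that the alternating sum of the deficits $f_d$ vanishes exactly under the stated hypothesis, to be the only genuinely non-formal step; the passage from a finitely supported solution $(t_d)$ to nonnegative stabilisation counts and the identification of a degree-preserving bijection of generators as a tame isomorphism are routine.
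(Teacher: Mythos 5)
The paper offers no proof of this lemma at all---it is dismissed with the single sentence ``The following lemma is standard''---so your proposal is not competing with an argument of the author's but filling a gap, and it does so correctly. Your reduction is the right one: forgetting the differential, a stabilisation in degree $j$ contributes one free generator in each of the degrees $j$ and $j+1$, a tame isomorphism presupposes a degree-preserving bijection of generating sets, and two free graded algebras with the same degreewise generator counts are tamely isomorphic via that bijection (the empty word of elementary automorphisms). The invariance of the alternating sum $\sum_d(-1)^d e_d = k-l$ gives the ``only if'' direction, and the recurrence $t_d+t_{d-1}=f_d$ with the telescoping solution $t_d=(-1)^d\sum_{j\le d}(-1)^j f_j$, which is finitely supported exactly when $l_1-k_1=l_0-k_0$, together with the shift by a large constant $M$ to make the stabilisation multiplicities nonnegative, gives the ``if'' direction; I checked that $(s^{(0)}_d+s^{(0)}_{d-1})-(s^{(1)}_d+s^{(1)}_{d-1})=t_d+t_{d-1}=f_d$ does hold for all $d$ with your choices. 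The one point worth amending is that your telescoping formula presumes a $\ZZ$-grading (``a degree below the support of all generators''), whereas the paper grades in $\ZZ/\ZZ\mu$; the even/odd dichotomy in the statement already forces $\mu$ to be even or zero, and for even $\mu>0$ the cyclic system $t_d+t_{d-1}=f_d$, $d\in\ZZ/\ZZ\mu$, is solvable over $\ZZ$ precisely when $\sum_d(-1)^d f_d=0$ (solve by the same iteration and check consistency at the wrap-around), so your argument survives this case with a one-line addition.
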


On the other hand, there are examples of acyclic DGAs that are not isomorphic, but whose generators can be identified by a bijection which preserves the grading. We produce such an example just out of curiosity, and note that it is irrelevant for the application of invariants of Legendrian submanifolds. Namely, in this setting it is the \emph{stable}-tame isomorphism class of the DGA that contains invariant information.
\begin{example}
\label{ex:acyclic}
Consider the following two DGAs whose underlying graded algebra is given by $\mathcal{A}=\langle a_1,a_2,b_1,b_2 \rangle$, where the elements $a_1,a_2$ are of degree $|a_1|=|a_2|=0$, while the elements $b_1,b_2$ are of degree $|b_1|=|b_2|=1$.
\begin{enumerate}
\item The first DGA has differential $\partial_0$ defined by prescribing
\begin{eqnarray*}
\partial_0(a_1) &=& 0,\\
\partial_0(a_2) &=& 0,\\
\partial_0(b_1) &=& 1,\\
\partial_0(b_2) &=& 0,
\end{eqnarray*}
which clearly yields an acyclic DGA.
\item The second DGA has differential $\partial$ defined by prescribing
\begin{eqnarray*}
\partial(a_1) &=& 0,\\
\partial(a_2) &=& 0,\\
\partial(b_1) &=& 1+a_1a_2,\\
\partial(b_2) &=& a_1^2.
\end{eqnarray*}
Since
\[ \partial(b_1-a_1b_1a_2+b_2a_2^2)=1,\]
it follows that this DGA is acyclic as well.
\end{enumerate}
\end{example}
\begin{proposition} The two semifree DGAs described in Example \ref{ex:acyclic} are not isomorphic.
\end{proposition}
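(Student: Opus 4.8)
The plan is to rule out an isomorphism by turning the question into a rigidity statement about two generators of a free bimodule over the noncommutative ring $R^e:=R\otimes_\kk R^{\OP{op}}$, where $R:=\kk\langle a_1,a_2\rangle$ is the degree-$0$ subalgebra.

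First I would normalise a hypothetical isomorphism $\Phi\colon(\mathcal A,\partial_0)\to(\mathcal A,\partial)$. Its restriction to degree $0$ is an automorphism $\psi$ of $R$; precomposing $\Phi$ with the $\partial_0$-automorphism acting by $\psi^{-1}$ on $R$ and the identity on $b_1,b_2$, I may assume $\Phi$ fixes $a_1$ and $a_2$. Then $\Phi$ is determined by $B_i:=\Phi(b_i)$ in the degree-one part $V_1$, and the relation $\partial\Phi=\Phi\partial_0$ forces $\partial B_1=1$ and $\partial B_2=0$. Since $\Phi$ fixes $R$, its action on $V_1$ is exactly the $R$-bimodule endomorphism $\beta\colon b_i\mapsto B_i$. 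Now $V_1$ is the \emph{free} $(R,R)$-bimodule on $b_1,b_2$, i.e. the free left module $(R^e)^2$, so $\Phi$ is bijective iff $\beta\in\OP{GL}_2(R^e)$. Moreover $\partial\circ\beta=\partial_0$, because both sides are the bimodule map $V_1\to R$ carrying $b_1\mapsto1,\ b_2\mapsto0$ (the $a_i$ being cycles makes these maps bimodule morphisms). Hence \emph{an isomorphism exists iff the two bimodule surjections $\partial_0,\partial\colon (R^e)^2\twoheadrightarrow R$ lie in one $\OP{GL}_2(R^e)$-orbit}, equivalently iff some $\beta\in\OP{GL}_2(R^e)$ satisfies $\beta(\ker\partial_0)=\ker\partial$. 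In particular such a $\beta$ would restrict to an isomorphism $\ker\partial_0\xrightarrow{\ \sim\ }\ker\partial$ of $R^e$-modules.

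Next I would pin down $\ker\partial_0$. Writing $m\colon R^e\to R$ for the multiplication map, one has $\partial_0(\Sigma_1,\Sigma_2)=m(\Sigma_1)$, so $\ker\partial_0=\ker m\oplus R^e$. Using the standard cotangent bimodule resolution of the free algebra, $0\to (R^e)^2\xrightarrow{d} R^e\xrightarrow{m} R\to 0$ with $d$ sending the generators to $a_i\otimes1-1\otimes a_i$, the map $d$ is injective, whence $\ker m\cong (R^e)^2$ and therefore $\ker\partial_0\cong (R^e)^3$ is \emph{free} of rank three. By the contrapositive of the previous paragraph, it now suffices to prove that $\ker\partial$ is \emph{not} a free $R^e$-module; that would forbid $\beta$ and hence any isomorphism.

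I expect this last step to be the genuine obstacle, and to require the paper's main tool. One should stress first that no elementary or commutative invariant can succeed: the analogous problem over the super-commutative quotient \emph{is} solvable, the assignment $a_i\mapsto a_i$, $b_1\mapsto(1-a_1a_2)b_1+a_2^2b_2$, $b_2\mapsto -a_1^2b_1+(1+a_1a_2)b_2$ being an isomorphism of the abelianised differential graded algebras, and the $(a_1,a_2)$-adic associated graded differentials of $\partial$ and $\partial_0$ coincide. Thus the distinction is invisible to abelianisation and to the associated graded, and lives entirely in the module theory of $R^e$. Concretely one must analyse
\[
\ker\partial=\bigl\{(\Sigma_1,\Sigma_2)\in (R^e)^2 : \Sigma_1\cdot(1+a_1a_2)+\Sigma_2\cdot a_1^2=0\bigr\},
\]
(the products denoting the bimodule action on $R$), and show it is not free — equivalently not generated by three elements. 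By Schanuel's lemma $\ker\partial\oplus R^e\cong(R^e)^4$, so $\ker\partial$ is stably free of the same class as $\ker\partial_0$; the point is that $R^e=R\otimes R^{\OP{op}}$, unlike the free algebra $R$ itself, is \emph{not} a free ideal ring and can carry stably free modules that are not free. Establishing non-freeness of $\ker\partial$ is where I would invoke P.\ Cohn's weak division algorithm, exactly as for Theorem~\ref{thm:main}, to control generating sets and syzygies over $R^e$; the interaction of the square $a_1^2$ with $1+a_1a_2$ (a pair that abelianises to a complete intersection, but whose noncommutative syzygy module is larger) is what I expect to produce the obstruction.
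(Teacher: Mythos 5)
Your reduction is attractive and, as far as it goes, correct: after normalising $\Phi$ to fix $a_1,a_2$ (using that the extension of $\psi^{-1}$ by the identity on $b_1,b_2$ is a $\partial_0$-automorphism), an isomorphism is exactly an element $\beta\in\OP{GL}_2(R^e)$ with $\partial\circ\beta=\partial_0$ on the free bimodule $V_1$, and the computation $\ker\partial_0\cong\ker m\oplus R^e\cong(R^e)^3$ via the cotangent sequence of the free algebra is right. But the proof stops exactly where the proposition actually lives. You never prove that $\ker\partial$ is not free (equivalently, that the two surjections are in different $\OP{GL}_2(R^e)$-orbits); you only announce that you ``expect'' the obstruction to come from the pair $(1+a_1a_2,\,a_1^2)$ and that you ``would invoke'' Cohn's weak algorithm. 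That is the entire content of the statement, so this is a genuine gap, not a routine verification. Worse, the tool you name does not obviously apply: the weak algorithm and the free-ideal-ring theory hold for the free algebra $R$ itself, whereas your module lives over $R^e=R\otimes_\kk R^{\OP{op}}$, which you yourself observe is not a fir. There is also a risk built into the strategy: non-isomorphism of the kernels is only a \emph{sufficient} condition for the surjections to lie in different orbits, and you have given no evidence that $\ker\partial$ (which you correctly show is stably free) fails to be free; if it happens to be free of rank three, your route collapses even though the proposition is true.

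For comparison, the paper's proof sidesteps all module theory over $R^e$. It writes down the general form of a degree-one element of $\partial_0^{-1}(1)$, namely $b_1+\sum u_ib_2v_i+\sum x_ib_1y_i$ with $\sum x_iy_i=0$, and then applies the one-dimensional evaluations $a_1\mapsto r_1$, $a_2\mapsto r_2$. Under every such evaluation an element of $\partial_0^{-1}(1)$ projects to $b_1+cb_2$, which together with $b_2$ freely generates $\langle b_1,b_2\rangle$; by contrast the explicit element $b_1-a_1b_1a_2+b_2a_2^2\in\partial^{-1}(1)$ projects to $(1-r_1r_2)b_1+r_2^2b_2$, which degenerates for a suitable choice of $(r_1,r_2)$. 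In your language, the paper distinguishes the two $\OP{GL}_2(R^e)$-orbits by pushing them forward along finite-dimensional representations of $R$ rather than by computing the kernels over $R^e$. If you want to salvage your approach, the most promising fix is to graft this evaluation idea onto your normal form: a $\beta\in\OP{GL}_2(R^e)$ with $\partial\circ\beta=\partial_0$ would specialise, under $a_i\mapsto r_i$, to a matrix in $\OP{GL}_2(\kk)$ carrying $(1,0)$ to $(1+r_1r_2,\,r_1^2)$ in a way compatible with the $b_1$-coefficient constraint above; absent something like that, the non-freeness of $\ker\partial$ remains an unproved (and nontrivial) assertion.
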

\begin{proof}
The first DGA has the property that the elements in $\partial_0^{-1}(1) \subset \mathcal{A}$ of homogeneous degree $1$ all can be written as
\[ x=b_1+\sum_{i=1}^m u_ib_2v_i+\sum_{i=1}^n x_ib_1y_i,\]
for arbitrary elements $u_i,v_i \in \langle a_1,a_2 \rangle \subset \mathcal{A}$, $i=1,\hdots,m$, together with elements $x_i,y_i \in \langle a_1,a_2\rangle$, $i=1,\hdots,n$, satisfying $\sum_{i=1}^m x_iy_i=0$. Consider the quotient projection
\begin{gather*}
\phi_{(r_1,r_2)} \colon \mathcal{A} \to \mathcal{A}/\mathcal{A}R_{(r_1,r_2)}\mathcal{A}, \\
R_{(r_1,r_2)}:=\{a_1-r_1,a_2-r_2\} \subset \mathcal{A}, \:\: (r_1,r_2) \in \kk^2,
\end{gather*}
of algebras together with the canonical identification $\psi_{(r_1,r_2)} \colon \mathcal{A}/\mathcal{A}R_{(r_1,r_2)}\mathcal{A} \xrightarrow{\simeq} \langle b_1,b_2 \rangle$.

It follows that for \emph{any} pair $(r_1,r_2) \in \kk^2$ and element $x \in \partial_0^{-1}(1)$, the subset $\{\psi_{(r_1,r_2)} \circ \phi_{(r_1,r_2)}(x),b_2\} \subset \langle b_1,b_2 \rangle$ is a free generating set of the algebra $\langle b_1,b_2 \rangle$. Namely,
\[\psi_{(r_1,r_2)} \circ \phi_{(r_1,r_2)}(x)=\psi_{(r_1,r_2)} \circ \phi_{(r_1,r_2)}\left(b_1+\sum_{i=1}^mu_iv_ib_2\right)\]
by the above.

On the other hand, for any fixed element $s_1b_1+s_2b_2 \in \langle b_1,b_2 \rangle$, $s_i \in \kk$, of degree 1, we can readily find a pair $(r_1,r_2) \in \kk^2$ for which
\[\{\psi_{(r_1,r_2)} \circ \phi_{(r_1,r_2)}(b_1-a_1b_1a_2+b_2a_2^2),s_1b_1+s_2b_2\} \subset \langle b_1,b_2 \rangle\]
is linearly dependent over $\kk$ (and hence, in particular does not generate $\langle b_1,b_2 \rangle$). Since $b_1-a_1b_1a_2+b_2a_2^2 \in \partial^{-1}(1)$, it follows that the two DGAs cannot be isomorphic.
\end{proof}

\section{The algebraic setup}
\label{sec:setup}

In this section we give a precise definition of the algebraic setup used. In the following $\kk$ will denote an arbitrary field. By $\langle a_1,\hdots,a_k \rangle$ we will denote the free, fully noncommutative, associative, and unital $\kk$-algebra generated by elements $a_1,\hdots,a_k$. For a subset $B \subset \mathcal{A}$, we use $\mathcal{A}B$, $B\mathcal{A}$, and $\mathcal{A}B\mathcal{A}$ to denote the corresponding left, right, and two-sided ideals generated by $B$, respectively. The free product of two such algebras $\langle a_1,\hdots,a_k \rangle$ and $\langle a_{k+1},\hdots,a_{k+l}\rangle$ is the free algebra $\langle a_1,\hdots,a_k,a_{k+1},\hdots,a_{k+l} \rangle$.

\subsection{Semifree DGAs}
The main object of interest will be a so-called {\bf semifree differential graded algebra} (DGA for short) $(\mathcal{A},\partial)$ over a field $\kk$. The underlying algebra will be the free unital algebra $\mathcal{A}=\langle a_1, \hdots, a_m\rangle$, freely generated by elements which each are equipped with a {\bf grading} $|a_i| \in \ZZ/\ZZ\mu$. In addition, we will also require each generator to have an associated {\bf action} $\ell(a_i) \in \RR_{>0}$. The grading and the action are both extended to all monomials in the above generators of $\mathcal{A}$ using the formulae
\begin{eqnarray*}
& & |r|  = 0, \:\: r \in \kk; \\
& & |a_{i_1}\cdot \hdots \cdot a_{i_k}|=|a_{i_1}|+\hdots+|a_{i_k}| \in \ZZ/\ZZ\mu,
\end{eqnarray*}
together with
\begin{eqnarray*}
& & \ell(0) = -\infty; \\
& & \ell(r)  = 0, \:\: r \in \kk \setminus \{0\};\\
& & \ell(a_{i_1}\cdot \hdots \cdot a_{i_k})=\ell(a_{i_1})+\hdots+\ell(a_{i_k}) \in \RR_{>0}.
\end{eqnarray*}
We will be writing $\mathcal{A}_i \subset \mathcal{A}$ for the vector space over $\kk$ spanned by the monomials of degree $i \in \ZZ/\ZZ\mu$. The differential $\partial \colon \mathcal{A}_\bullet \to \mathcal{A}_{\bullet-1}$ is $\kk$-linear of degree $-1$, satisfies $\partial^2$, $\partial|_\kk =0$, together with the graded Leibniz rule
\begin{equation*}
\label{eq:leibniz}
\partial(\mathbf{a} \mathbf{b})=\partial(\mathbf{a}) \mathbf{b} +(-1)^{|\mathbf{a}|}\mathbf{a}\partial(\mathbf{b}),
\end{equation*}
for any monomials $\mathbf{a},\mathbf{b} \in \mathcal{A}$. Finally, we will also assume that:
\begin{enumerate}[label=(F):\:, ref=(F)]
\item \label{F} For any generator $a_i$, $i=1,\hdots,m$, the action of a monomial appearing in $\partial(a_i)$ with a non-zero coefficient is strictly less than $\ell(a_i)$.
\end{enumerate}

\begin{remark} For the Chekanov-Eliashberg algebra of a Legendrian submanifold $\Lambda \subset (P \times \RR,dz+\theta)$, the generators $a_1,\hdots,a_m$ are given by the so-called Reeb chords on $\Lambda$ (which here are assumed to be generic). Recall that a Reeb chord is an integral curve of $\partial_z$ having both endpoints on $\Lambda$. The grading is induced by the Conley-Zehnder index associated to a Reeb chord, which is well-defined modulo the Maslov number $\mu \in \ZZ$ of $\Lambda$, while the action simply is given by the length
\[ \ell(a_i) := \int_{a_i} (dz+\theta) \in \RR_{>0} \]
of the Reeb chord. We refer to \cite{NonIsoLeg} and \cite{ContHomR} for more details. The fact that the differential is of degree $-1$ and satisfies \ref{F} (i.e.~that it decreases the action) follows from the fact that it is defined by a count of rigid pseudoholomorphic polygons in $P$ having boundary on the canonical projection $\Pi_{\OP{Lag}}(\Lambda) \subset P$ of $P$ and corners at the double-points. Recall that the latter double-points are in a natural bijective correspondence with the Reeb chords. More precisely, the degree follows from the dimension formula for the moduli space of pseudo-holomorphic polygons expressed in terms of the Conley-Zehnder indices, while property \ref{F} follows from the formula for their symplectic areas, which necessarily is positive.
\end{remark}

\subsection{Stable-tame isomorphism}
\label{sec:stabletame}
For the applications that we have in mind, the strongest known invariant that can be extracted from the DGA is its stable-tame isomorphism class. We here proceed to give a definition. An {\bf elementary automorphism} of a semifree algebra $\mathcal{A}=\langle a_1,\hdots,a_m \rangle$ with a choice of generators is a unital algebra isomorphism of the form
\[ a_i \mapsto \begin{cases} a_i, & i \neq i_0, \\
ra_{i_0} + A, & i = i_0,
\end{cases}\]
for some $1 \le i_0 \le m$, and elements
\begin{align*}
& r \in \kk \setminus \{0\},\\
& A \in \langle a_1,\hdots,a_{i_0-1},a_{i_0+1},\hdots,a_m \rangle.
\end{align*}
A {\bf tame isomorphism} of two DGAs is one which can be decomposed as a sequence of elementary automorphisms, after first identifying their sets of generators. As a side note, it seems to be unknown whether there are DGA isomorphisms which are not tame.

Consider the {\bf stabilisation in degree $i \in \ZZ / \ZZ\mu$}, which is the DGA $(\mathcal{S}_i,\partial_{\mathcal{S}_i})$ with underlying algebra $\mathcal{S}_i:=\langle a,b \rangle$, grading given by $|a|=i$ and $|b|=i+1$, and differential defined by $\partial_{\mathcal{S}_i}(b)=a$. Two DGAs are {\bf stable-tame isomorphic} if they become tame isomorphic after taking free products with a number of stabilisations $(\mathcal{S}_i,\partial_{\mathcal{S}_i})$ for different $i \in \ZZ/\ZZ\mu$. 

By \cite[Section 3.1]{Computable} it follows that, up to algebra isomorphism and taking the free product with a free algebra, the characteristic algebra of a DGA is invariant under stable-tame isomorphisms of the DGA. (In fact, tameness is irrelevant here.)

\subsection{Degree functions and the weak algorithm}
\label{sec:weak}
Here we recall some techniques used in the study of free algebras. The weak algorithm is a generalisation of Euclid's division algorithm to the noncommutative setting due to P. Cohn. We refer to \cite[Section 2]{FreeRings} for an introduction. Below follows an overview of the results needed.

Let $\mathcal{A}=\langle a_1,\hdots,a_m\rangle$ be the free algebra with a choice of $m$ generators over $\kk$. Recall the definition of a {\bf degree function}
\[ \nu \colon \mathcal{A} \to \NN \cup \{ -\infty \}\]
given in \cite[Section 2.4]{FreeRings}, which is a function satisfying the following properties:
\begin{enumerate}
\item $\nu(x) \ge 0$ whenever $x \neq 0$, $\nu(0)=-\infty$;
\item $\nu(x-y) \le \max\{\nu(x),\nu(y)\}$;
\item $\nu(xy) = \nu(x) +\nu(y)$; and
\item $\nu(x)=0$ if and only if $x \in \kk \setminus \{0\}$.
\end{enumerate}
As an example, the usual degree $\nu_0$ of an element of $\mathcal{A}$, considered as a noncommutative polynomial in the indeterminates $a_1,\hdots,a_m$, clearly satisfies the above properties.

Let $\{a_{i,1}\cdot\hdots\cdot a_{i,{j_i}}\}_{i \in \NN}$, be the infinite $\kk$-basis of monomials in the generators $a_1,\hdots,a_m$ ordered by, say, the lexicographical order. The {\bf formal degree} of an element $x \in \mathcal{A}$ induced by $\nu$ and the above set of generators is given by
\begin{gather*}
x=\sum_{i \in \NN} r_i a_{i,1}\cdot\hdots\cdot a_{i,{j_i}} \in \mathcal{A},\\
\nu_{\OP{for}}(x):=\max_i\nu(r_ia_{i,1}\cdot \hdots \cdot a_{i,{j_i}}),
\end{gather*}
where the $r_i \in \kk$ above are uniquely determined and vanishing for all but finitely many values of $\NN$. Observe that $\nu=\nu_{\OP{for}}$ holds by definition in the case $\nu=\nu_0$.

A family $\{x_1,\hdots,x_n\} \subset \mathcal{A}$ of elements is called {\bf (left) $\nu$-dependent} given that there exist elements $\{y_1,\hdots,y_n\} \subset \mathcal{A}$ for which
\[ \nu\left(\sum_{i=1}^n y_i x_i\right) < \max_{i=1,\hdots,n} \nu(y_ix_i). \]
Moreover, we call such a family {\bf (left) $\nu$-independent} if it is not (left) $\nu$-dependent. An element $x \in \mathcal{A}$ satisfying the relation
\[ \nu\left( x-\sum_{i=1}^n y_i x_i \right) < \nu(x), \:\: \nu(y_ix_i) \le \nu(x), \: i=1,\hdots,n, \]
is said to be {\bf (left) $\nu$-dependent on the family $\{x_1,\hdots,x_n\}$}.

\begin{definition} The degree-function $\nu$ on $\mathcal{A}$ is said to satisfy the {\bf (left) weak algorithm} if any left $\nu$-dependent family $\{x_1,\hdots,x_n\} \subset \mathcal{A}$ of elements satisfying
\[ \nu(x_1) \le \hdots \le \nu(x_n) \]
has the property that $x_i$ is left $\nu$-dependent on $\{ x_1,\hdots,x_{i-1}\}$ for some $1 \le i \le n$.
\end{definition}
The main result needed from the theory of noncommutative rings is the following theorem.
\begin{theorem}[Theorems 2.5.1 (\S 2.5) and 2.4.6 (\S 2.4) in \cite{FreeRings}]
\label{thm:fir}
Assume that we are given a degree function as above which coincides with the induced formal degree, i.e.~that $\nu=\nu_{\OP{for}}$ holds. Then:
\begin{enumerate}
\item The degree-function $\nu$ satisfies the weak algorithm.
\item Any finitely generated left ideal $\mathfrak{a} =\mathcal{A}x_1+\hdots+\mathcal{A}x_m$ can be freely generated by a $\nu$-independent family $\{b_1,\hdots,b_n\} \subset \mathcal{A}$ of generators.
\end{enumerate}
\end{theorem}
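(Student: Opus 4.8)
The plan is to prove the two parts in the style of Cohn, treating (1) as the substantial combinatorial input and deducing (2) from it by a reduction argument. The first preparatory step is to set up \emph{leading components}. Because $\nu=\nu_{\OP{for}}$, every nonzero $x\in\mathcal{A}$ decomposes as a sum of $\nu$-homogeneous pieces (group the monomials in the expansion of $x$ by their $\nu$-value), and I write $\bar{x}$ for the top piece, of $\nu$-value $\nu(x)$. The axiom $\nu(xy)=\nu(x)+\nu(y)$ shows first that $\mathcal{A}$ is a domain (if $xy=0$ then $\nu(x)+\nu(y)=-\infty$) and hence that $\overline{xy}=\bar{x}\,\bar{y}$: leading components are again homogeneous and multiply with no cancellation. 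With this dictionary, left $\nu$-dependence of a family becomes an honest homogeneous identity among leading components: if $\nu(\sum_i y_ix_i)<d:=\max_i\nu(y_ix_i)$ and $I=\{\,i:\nu(y_i)+\nu(x_i)=d\,\}$, then extracting the $\nu$-value-$d$ part yields $\sum_{i\in I}\bar{y_i}\,\bar{x_i}=0$.

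For part (1), I would order so that $\nu(x_1)\le\dots\le\nu(x_n)$ and set $k=\max I$, so $\bar{x_k}$ has the largest $\nu$-value among the $\bar{x_i}$ with $i\in I$. The heart of the matter is a combinatorial lemma about the free monoid applied to the identity $\sum_{i\in I}\bar{y_i}\,\bar{x_i}=0$: each monomial occurring in a summand factors uniquely as a prefix supplied by $\bar{y_i}$ and a suffix supplied by $\bar{x_i}$, and cancellation can only take place between identical monomials. Since the suffixes contributed by $\bar{x_k}$ have $\nu$-value at least that of the suffixes from the other $\bar{x_i}$, this suffix structure forces the leading monomials of $\bar{x_k}$ to appear as suffixes inside the products $\bar{y_i}\bar{x_i}$ for $i<k$; tracking these overlaps produces homogeneous coefficients $\bar{z_j}$ with $\nu(\bar{z_j})+\nu(x_j)=\nu(x_k)$ and $\bar{x_k}=\sum_{j\in I,\,j<k}\bar{z_j}\,\bar{x_j}$. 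Lifting each $\bar{z_j}$ to some $z_j\in\mathcal{A}$ of the same $\nu$-value, the element $x_k-\sum_{j<k}z_jx_j$ has $\nu$-value strictly smaller than $\nu(x_k)$ while $\nu(z_jx_j)\le\nu(x_k)$, which is exactly the statement that $x_k$ is left $\nu$-dependent on $\{x_1,\dots,x_{k-1}\}$.

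For part (2), I would take a finite generating set $x_1,\dots,x_m$ of the left ideal $\mathfrak{a}$, sorted by non-decreasing $\nu$-value, and run a reduction driven by (1). Whenever the family is $\nu$-dependent, part (1) supplies an index $i$ and coefficients $y_j$ ($j<i$) with $\nu(x_i-\sum_{j<i}y_jx_j)<\nu(x_i)$; replacing $x_i$ by $x_i':=x_i-\sum_{j<i}y_jx_j$ leaves $\mathfrak{a}$ unchanged, strictly lowers one entry of the multiset of $\nu$-values, and simply drops the generator if $x_i'=0$. Since the multiset of $\nu$-values lies in $\NN\cup\{-\infty\}$ and strictly decreases in the well-founded multiset order at each step, the process terminates at a $\nu$-independent generating family $\{b_1,\dots,b_n\}$. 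Finally, $\nu$-independence forces freeness: a nontrivial left relation $\sum_i y_ib_i=0$ with some $y_i\ne 0$ would give $\nu(\sum_i y_ib_i)=-\infty<\max_i\nu(y_ib_i)$, because $\mathcal{A}$ is a domain and the $b_i$ are nonzero, exhibiting a $\nu$-dependence and contradicting independence. Hence $\{b_1,\dots,b_n\}$ freely generates $\mathfrak{a}$.

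I expect the genuine difficulty to sit entirely in part (1): it is the free-monoid overlap-and-cancellation lemma that converts the vanishing homogeneous sum $\sum\bar{y_i}\bar{x_i}=0$ into the explicit left-dependence of the highest factor on the lower ones, and making the bookkeeping of prefixes and suffixes precise (rather than merely plausible) is where all the work lies. By contrast, the domain property, the decomposition into leading components, the termination of the reduction in (2), and the implication from $\nu$-independence to a free basis all follow formally from the four degree-function axioms together with the hypothesis $\nu=\nu_{\OP{for}}$.
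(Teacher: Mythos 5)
You should first be aware that the paper contains no proof of this statement: Theorem \ref{thm:fir} is imported as a black box from Cohn's book (Theorems 2.5.1 and 2.4.6 of \cite{FreeRings}), so there is no in-paper argument to compare against. That said, your architecture is the standard one and agrees with Cohn's: use $\nu=\nu_{\OP{for}}$ to decompose elements into $\nu$-homogeneous pieces, note that $\nu(xy)=\nu(x)+\nu(y)$ makes $\mathcal{A}$ a domain so that $\overline{xy}=\bar{x}\,\bar{y}$, convert left $\nu$-dependence into an exact homogeneous identity $\sum_{i\in I}\bar{y_i}\bar{x_i}=0$, prove the weak algorithm there, and deduce (2) by a Euclidean-style reduction that terminates because the $\nu$-values lie in $\NN\cup\{-\infty\}$. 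Your part (2) is complete as written, including the final observation that $\nu$-independence of nonzero elements in a domain forbids any nontrivial left relation and hence yields a free generating set.

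The genuine gap is exactly where you locate it, and locating it does not close it. The claim that a vanishing homogeneous sum $\sum_{i\in I}\bar{y_i}\bar{x_i}=0$ forces the factor $\bar{x_k}$ of largest $\nu$-value to satisfy an \emph{exact} identity $\bar{x_k}=\sum_{j\in I,\,j<k}\bar{z_j}\bar{x_j}$ is the entire content of the weak algorithm for free algebras; the prefix/suffix heuristics you give establish at most that every monomial of $\bar{x_k}$ has some monomial of some lower $\bar{x_j}$ as a suffix (using that prefixes of a fixed word with equal weight coincide, since all generators have positive weight), not that the coefficients $\bar{z_j}$ can be chosen consistently across all monomials of $\bar{y_k}$ so that the identity holds on the nose. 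Assembling the local cancellation data into a global left-factorisation is where Cohn does real work, via the formalism of transductions (partial left quotients by monomials) and an induction on $\min_{i\in I}\nu(\bar{y_i})$; it is also where one verifies that the dependent index really can be taken to be $k=\max I$ rather than some other element of $I$. Without some such device, part (1) --- and with it the freeness assertion in part (2), which invokes (1) at every reduction step --- remains unproved.
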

Strictly speaking, we only rely on part (1) of the above theorem, while part (2) is stated for completeness. On the other hand, note that the crucial result Proposition \ref{prp:main} proven below is a refinement of the latter statement to a special setting needed here.

\subsection{A degree-function induced by the action filtration}
\label{sec:action}
Assume that we are given a DGA $(\mathcal{A}=\langle a_1,\hdots,a_m\rangle,\partial)$ with an action filtration $\ell$ taking values in $\RR_{\ge 0} \cup \{-\infty\}$ defined on the monomials. Again, consider the basis $\{a_{i,1}\cdot\hdots\cdot a_{i,{j_i}}\}_{i \in \NN}$ consisting of the monomials. We can extend the action to arbitrary elements by
\[ \ell\left(\sum_i r_ia_{i,1}\cdot \hdots \cdot a_{i,j_i}\right):=\max_i\ell(r_ia_{i,1}\cdot \hdots \cdot a_{i,j_i}), \]
where $r_i \in \kk \setminus \{0\}$ are non-zero for finitely many values of $i \in \NN$.

For a DGA with action filtration, we approximate the value of $\ell(a_i)$ for each generator, $i=1,\hdots,m$, by a rational number in $\QQ_{>0}$. In this way we can always assume that the action is $\QQ$-valued, i.e.~that we have a map $\widetilde{\ell} \colon \mathcal{A} \to \QQ_{ \ge 0} \cup \{-\infty\}$. Finally, we set
\[ \nu \colon \mathcal{A} \to \NN \cup \{-\infty\} \]
to be the action defined as above, but where we have assigned the value $N\widetilde{\ell}(a_i) \in \NN_{>0}$ to the generator $a_i$, where $N \in \NN_{>0}$ is the greatest common denominator of the actions $\widetilde{\ell}(a_1), \hdots, \widetilde{\ell}(a_m) \in \QQ_{ \ge 0}$ of the generators.
\begin{lemma}
The map $\nu$ satisfies the assumptions of a degree-function, as defined in Section \ref{sec:weak} above, for which $\nu=\nu_{\OP{for}}$ moreover is satisfied.
\end{lemma}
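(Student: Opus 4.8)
The plan is to verify the four axioms of a degree-function for $\nu$ in turn, and then to read off the equality $\nu = \nu_{\OP{for}}$ from the definitions. Throughout I would work with the monomial basis $\{a_{i,1}\cdots a_{i,j_i}\}_{i \in \NN}$ of $\mathcal{A}$, on which $\nu$ takes the explicit value $\nu(a_{i,1}\cdots a_{i,j_i}) = \sum_l N\widetilde{\ell}(a_{i,l})$, a sum of the positive integers $N\widetilde{\ell}(a_{i,l}) \in \NN_{>0}$ (the empty word contributing $0$), and on a general element $x = \sum_i r_i a_{i,1}\cdots a_{i,j_i}$ with all $r_i \neq 0$ by $\nu(x) = \max_i \nu(a_{i,1}\cdots a_{i,j_i})$.

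The three axioms other than multiplicativity are routine. Axiom (1) is immediate, since each generator weight is a positive integer, so every monomial has non-negative $\nu$-value, the maximum over a nonempty index set is $\ge 0$ for $x \neq 0$, and $\nu(0) = -\infty$ by the usual convention for an empty maximum. Axiom (2) follows because every monomial occurring with a nonzero coefficient in $x - y$ already occurs in $x$ or in $y$, so its $\nu$-value, and hence $\nu(x-y)$, is bounded above by $\max\{\nu(x), \nu(y)\}$. Axiom (4) is where strict positivity of the action enters: as $N\widetilde{\ell}(a_i) \ge 1$ for every generator, any nonempty monomial has $\nu \ge 1$, so $\nu(x) = 0$ forces each monomial of $x$ to be the empty word, i.e.\ $x \in \kk \setminus \{0\}$; the converse is clear.

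The substance of the lemma is the multiplicativity axiom (3), $\nu(xy) = \nu(x) + \nu(y)$, and the difficulty is excluding cancellation among the top-weight terms. The approach I favour is to regrade $\mathcal{A}$ by $\nu$: let $\mathcal{A}^{(d)}$ be the span of the monomials $w$ with $\nu(w) = d$. Since concatenation of monomials adds weights, $\nu(ww') = \nu(w) + \nu(w')$ for monomials, so $\mathcal{A} = \bigoplus_{d \ge 0} \mathcal{A}^{(d)}$ is an $\NN$-graded algebra, and by construction $\nu(x)$ is precisely the top degree occurring in the decomposition $x = \sum_d x^{(d)}$ (so that $x^{(\nu(x))} \neq 0$). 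Setting $p = \nu(x)$ and $q = \nu(y)$ (the cases $x = 0$ or $y = 0$ being trivial), the weight-$(p+q)$ component of $xy$ is exactly $x^{(p)} y^{(q)}$, because any other contribution $x^{(d)} y^{(e)}$ with $d + e = p + q$ would force $d = p$ and $e = q$. Thus $\nu(xy) = p + q$ reduces to the nonvanishing $x^{(p)} y^{(q)} \neq 0$.

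This nonvanishing is the only genuinely nontrivial input, and it amounts to the fact that the free algebra $\mathcal{A} = \kk\langle a_1, \ldots, a_m\rangle$ is an integral domain; granting this, the product of the nonzero elements $x^{(p)}$ and $y^{(q)}$ is nonzero. I would establish the domain property in the standard way, now using the word-length grading $\mathcal{A} = \bigoplus_{d \ge 0} V^{\otimes d}$ with $V = \bigoplus_{i=1}^m \kk a_i$: for nonzero $u, v$ with top word-length components $u_d, v_e$, the length-$(d+e)$ component of $uv$ is the image of $u_d \otimes v_e$ under the canonical isomorphism $V^{\otimes d} \otimes V^{\otimes e} \xrightarrow{\sim} V^{\otimes(d+e)}$; as $u_d, v_e \neq 0$ and $\kk$ is a field, $u_d \otimes v_e \neq 0$, whence $uv \neq 0$. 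This yields $x^{(p)} y^{(q)} \neq 0$ and completes axiom (3). Finally, $\nu = \nu_{\OP{for}}$ follows at once: multiplicativity and $\nu(r_i) = 0$ give $\nu(r_i a_{i,1}\cdots a_{i,j_i}) = \nu(a_{i,1}\cdots a_{i,j_i})$ for $r_i \in \kk \setminus \{0\}$, so the maximum defining $\nu_{\OP{for}}$ agrees term-by-term with the maximum defining $\nu$.
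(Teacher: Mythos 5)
Your proof is correct. Note that the paper states this lemma \emph{without proof}, treating it as a routine verification, so there is no argument in the source to compare against; your write-up supplies precisely the details that were omitted. Axioms (1), (2) and (4) are indeed immediate from the definition of $\nu$ as a maximum of positive integer monomial weights, and you correctly identify the one substantive point, namely the multiplicativity axiom (3): your reduction via the $\NN$-grading of $\mathcal{A}$ by $\nu$-weight (where the top-weight component of $xy$ is $x^{(p)}y^{(q)}$ because $d\le p$, $e\le q$ and $d+e=p+q$ force $d=p$, $e=q$) to the statement that the free algebra over a field has no zero divisors is sound, and the tensor-power argument for the latter is the standard one. The identification $\nu=\nu_{\OP{for}}$ is then a direct unwinding of the two definitions, as you say, once one knows $\nu(r\mathbf{a})=\nu(\mathbf{a})$ for $r\in\kk\setminus\{0\}$. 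One cosmetic remark: the paper's phrase ``greatest common denominator'' should be read as a common denominator $N$ chosen so that each $N\widetilde{\ell}(a_i)$ lies in $\NN_{>0}$; your argument uses only this property, so nothing is affected.
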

This degree-function is moreover compatible with the differential in the following sense.
\begin{lemma}
\label{lma:bdyaction}
Suppose that $\widetilde{\ell}$ was constructed so that $|\ell(a_i)-\widetilde{\ell}(a_i)|>0$ is sufficiently small for each $i=1,\hdots,m$. It then follows that, for any non-zero $x \in \mathcal{A}$, the inequality
\[ \nu(\partial(x)) < \nu(x)\]
holds for the degree-function $\nu \colon \mathcal{A} \to \NN \cup \{ -\infty \}$ obtained from $\widetilde{\ell}$ as above.
\end{lemma}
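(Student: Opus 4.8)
The plan is to reduce the statement to the case of the generators $a_1,\hdots,a_m$, where it is an immediate consequence of condition \ref{F}, and then to bootstrap first to monomials via the Leibniz rule and finally to arbitrary elements via the identity $\nu=\nu_{\OP{for}}$. The key estimate to establish is thus
\[ \nu(\partial(a_i)) < \nu(a_i), \qquad i=1,\hdots,m. \]
Granting this, let $\mathbf{m}=a_{i_1}\hdots a_{i_k}$ be a monomial. The Leibniz rule writes $\partial(\mathbf{m})$ as a signed sum of the terms $T_j=\pm\, a_{i_1}\hdots a_{i_{j-1}}\,\partial(a_{i_j})\,a_{i_{j+1}}\hdots a_{i_k}$. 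For each nonzero $T_j$, multiplicativity (axiom (3), together with $\nu(\pm\,\cdot)=\nu(\cdot)$) gives $\nu(T_j)=\sum_{\ell\neq j}\nu(a_{i_\ell})+\nu(\partial(a_{i_j}))<\sum_{\ell}\nu(a_{i_\ell})=\nu(\mathbf{m})$, while any vanishing term contributes $-\infty$. Since axiom (2) extends to finite sums (using $\nu(-y)=\nu(y)$, as $-1\in\kk\setminus\{0\}$ has $\nu(-1)=0$), we conclude $\nu(\partial(\mathbf{m}))\le\max_j\nu(T_j)<\nu(\mathbf{m})$.

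The main obstacle is the generator estimate, because condition \ref{F} is phrased in terms of the original action $\ell$, whereas $\nu$ is built from the rational approximation $\widetilde{\ell}$, so the meaning of ``sufficiently small'' must be pinned down here. I would proceed as follows. Write $\partial(a_i)=\sum_t c_t\,\mathbf{m}_t$ with $c_t\in\kk\setminus\{0\}$. Condition \ref{F} asserts $\ell(\mathbf{m}_t)<\ell(a_i)$ for every generator $i$ and every monomial $\mathbf{m}_t$ occurring in $\partial(a_i)$, and only finitely many such strict inequalities occur. Hence
\[ \delta:=\min_{i,t}\bigl(\ell(a_i)-\ell(\mathbf{m}_t)\bigr)\in\RR_{>0} \]
is a strictly positive real; let $K$ be the maximal length of a monomial appearing in any $\partial(a_i)$. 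If the approximation is chosen with $|\ell(a_j)-\widetilde{\ell}(a_j)|<\delta/(K+1)=:\epsilon$ for all $j$ (this is the precise reading of ``sufficiently small''), then for a monomial $\mathbf{m}_t$ of length $s\le K$ one has $|\ell(\mathbf{m}_t)-\widetilde{\ell}(\mathbf{m}_t)|\le s\epsilon$, so that $\widetilde{\ell}(a_i)-\widetilde{\ell}(\mathbf{m}_t)\ge(\ell(a_i)-\ell(\mathbf{m}_t))-(s+1)\epsilon\ge\delta-(K+1)\epsilon>0$. Thus $\widetilde{\ell}(\mathbf{m}_t)<\widetilde{\ell}(a_i)$; scaling by the positive integer $N$ preserves the strict inequality, giving $\nu(\mathbf{m}_t)<\nu(a_i)$, and taking the maximum over $t$ yields $\nu(\partial(a_i))<\nu(a_i)$ as required.

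Finally, I would pass from monomials to an arbitrary nonzero $x=\sum_i r_i\,\mathbf{m}_i\in\mathcal{A}$ (distinct monomials $\mathbf{m}_i$, $r_i\in\kk\setminus\{0\}$). Since $\nu=\nu_{\OP{for}}$, we have $\nu(x)=\max_i\nu(\mathbf{m}_i)$, and $\partial(x)=\sum_i r_i\,\partial(\mathbf{m}_i)$ together with subadditivity gives $\nu(\partial(x))\le\max_i\nu(\partial(\mathbf{m}_i))$. By the monomial estimate each term satisfies $\nu(\partial(\mathbf{m}_i))<\nu(\mathbf{m}_i)\le\nu(x)$; as the values lie in $\NN\cup\{-\infty\}$ and the index set is finite, the maximum is strictly bounded by $\nu(x)$, whence $\nu(\partial(x))<\nu(x)$. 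The only genuinely delicate point in the whole argument is the uniform-gap step quantifying the approximation error, and everything else is a formal consequence of the degree-function axioms and the Leibniz rule.
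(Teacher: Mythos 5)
Your proof is correct and follows essentially the same route as the paper: verify $\nu(\partial(a_i))<\nu(a_i)$ on generators using condition \ref{F} and the finiteness of the monomials occurring in the $\partial(a_i)$, then propagate to monomials by the Leibniz rule and multiplicativity of $\nu$, and to general elements via $\nu=\nu_{\OP{for}}$; your only addition is to make the paper's ``sufficiently small'' quantitative with the uniform gap $\delta$ and the length bound $K$. One cosmetic slip: with $\epsilon=\delta/(K+1)$ the chain ends in $\delta-(K+1)\epsilon=0$ rather than $>0$, but since the approximation errors are \emph{strictly} less than $\epsilon$ the desired strict inequality $\widetilde{\ell}(\mathbf{m}_t)<\widetilde{\ell}(a_i)$ still holds (or simply take $\epsilon=\delta/(2(K+1))$).
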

\begin{proof}
For a generator $a_k$, take a monomial $a_{i,1}\cdot \hdots \cdot a_{i,j_i}$ in the generators arising in the expression $\partial(a_k)$ with a non-zero coefficient. Since
\[ \ell(a_k) -(\ell(a_{i,1}) + \hdots + \ell(a_{i,j_i})) > 0,\]
holds by assumption \ref{F}, we conclude that
\[ \widetilde{\ell}(a_k) -(\widetilde{\ell}(a_{i,1}) + \hdots + \widetilde{\ell}(a_{i,j_i})) > 0\]
holds for the action defined above as well, given that $|\ell(a_i)-\widetilde{\ell}(a_i)|>0$ is sufficiently small for each $i=1,\hdots,m$. Since
\[ \nu(a_k) -(\nu(a_{i,1}) + \hdots + \nu(a_{i,j_i}))=N\widetilde{\ell}(a_i) -(N\widetilde{\ell}(b_1) + \hdots + N\widetilde{\ell}(b_n)) > 0\]
also is satisfied, the statement now follows. To that end, using the Leibniz rule, it suffices to obtain inequalities of the above form for the finite monomials arising with non-zero coefficients in the expressions $\partial(a_i)$, $i=1,\hdots,m$.
\end{proof}

\section{The proof of Theorem \ref{thm:main}}
Consider a cycle $x \in (\mathcal{A},\partial)$ which vanishes inside the characteristic algebra, i.e.~$[x]=0 \in \mathcal{C}$. The goal is showing that $x=\partial(y)$ for some element $y \in \mathcal{A}$. By definition, there exist elements $u_1,v_1,w_n,\hdots,u_n,v_n,w_n \in \mathcal{A}$ for which
\[ x=u_1 \partial(v_1)w_1+\hdots+u_n \partial(v_n)w_n.\]
Using the Leibniz rule, we may now write
\[ x = u_1\partial(v_1w_1)+\hdots+u_n\partial(v_nw_n)-((-1)^{|v_1|}u_1v_1\partial(w_1)+\hdots+(-1)^{|v_n|}u_nv_n\partial(w_n)).\]
In particular, $x$ is contained inside the left-ideal
\[ \mathfrak{a}:= \mathcal{A}\partial(v_1w_1)+\hdots+\mathcal{A}\partial(v_nw_n)+\mathcal{A}\partial(w_1)+\hdots+\mathcal{A}\partial(w_n)\]
generated by boundaries. By Proposition \ref{prp:main} below, we conclude that this left-ideal is \emph{freely} generated by a set of boundaries $\partial(y_1),\hdots,\partial(y_m) \in \mathcal{A}$. Writing 
\[ x = x_1 \partial(y_1)+\hdots+x_m\partial(y_m),\]
the fact that $x$ is closed together with the Leibniz rule implies that
\[ \partial(x)=\partial(x_1)\partial(y_1)+\hdots+\partial(x_m)\partial(y_m)=0.\]
Since the above set of generators is free, we must have $\partial(x_i)=0$ for each $i=1,\hdots,m$. We have thus shown that
\[x=\partial((-1)^{|x_1|}x_1 y_1+\hdots+(-1)^{|x_m|}x_m y_m),\]
which finishes the claim.

What is left is showing the following proposition, which is the core of the argument.
\begin{proposition}
\label{prp:main}
Any left ideal $\mathfrak{a} \subset \mathcal{A}$ of the form
\[\mathfrak{a}=\mathcal{A}\partial(x_1)+\hdots+\mathcal{A}\partial(x_n)\]
in a DGA $(\mathcal{A},\partial)$ with action filtration can be freely generated by a family $\partial(y_1),\hdots,\partial(y_m)$ of boundaries.
\end{proposition}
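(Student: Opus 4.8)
The plan is to adapt P. Cohn's construction of a free generating set, which underlies Theorem \ref{thm:fir}(2), to the given family of boundary generators $\partial(x_1),\dots,\partial(x_n)$, but to carry out every reduction in such a way that the generators stay boundaries throughout. By Theorem \ref{thm:fir}(1) the degree-function $\nu$ coming from the action filtration satisfies the weak algorithm, and by Theorem \ref{thm:fir}(2) the left ideal $\mathfrak{a}$ is free of some finite rank $m$, with the property that \emph{any} $\nu$-independent family generating $\mathfrak{a}$ is automatically a basis. It therefore suffices to produce a $\nu$-independent generating family of $\mathfrak{a}$ that consists of $m$ boundaries, and the organizing principle is to track leading terms with respect to $\nu$ while running the reduction.

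First I would order the generators so that $\nu(\partial(x_1))\le\dots\le\nu(\partial(x_n))$ and ask whether the family is $\nu$-independent. If it is, we are done. If not, the weak algorithm yields an index $i$ for which $\partial(x_i)$ is left $\nu$-dependent on $\partial(x_1),\dots,\partial(x_{i-1})$, so there are coefficients $c_j\in\mathcal{A}$ with
\[ \nu\Big(\partial(x_i)-\sum_{j<i}c_j\,\partial(x_j)\Big)<\nu(\partial(x_i)),\qquad \nu(c_j\,\partial(x_j))\le\nu(\partial(x_i)). \]
The natural move is to replace $\partial(x_i)$ by the lower-$\nu$ element $\partial(x_i)-\sum_{j<i}c_j\,\partial(x_j)$, which lies in $\mathfrak{a}$ and leaves the ideal unchanged; since $\nu$ is $\NN$-valued, iterating this terminates at a $\nu$-independent generating family. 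The difficulty is that this replacement need not be a boundary, because the $c_j$ need not be cycles.

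This is precisely where Lemma \ref{lma:bdyaction}, stating $\nu(\partial(z))<\nu(z)$, must enter. The Leibniz rule gives the identity
\[ \sum_{j<i}c_j\,\partial(x_j)=\partial\Big(\sum_{j<i}(-1)^{|c_j|}c_j x_j\Big)-\sum_{j<i}(-1)^{|c_j|}\partial(c_j)\,x_j, \]
so the reduced element differs from an honest boundary exactly by the error term $\sum_{j<i}(-1)^{|c_j|}\partial(c_j)x_j$; in particular, if the $c_j$ can be chosen to be cycles, the error term vanishes and the reduction preserves boundaries. The key claim I would try to establish is therefore that a $\nu$-dependence among boundaries can always be realized by cycle coefficients. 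The handle on this is that applying $\partial$ to the displayed dependence relation gives $\sum_{j<i}\partial(c_j)\,\partial(x_j)=-\partial(r)$ with $\nu(\partial(r))<\nu(r)<\nu(\partial(x_i))$, and since each $\partial(c_j)$ has strictly smaller $\nu$-degree than $c_j$, the boundaries of the coefficients pair with the $\partial(x_j)$ only in strictly lower degree. I would exploit this to correct the $c_j$ to cycles without raising degrees, by a secondary downward induction on $\nu$: replace the top-degree part of each $c_j$ by a cycle with the same leading term, absorbing the discrepancy into lower-order terms that feed back into the induction.

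The main obstacle is exactly this cycle-coefficient step, i.e. showing that the weak-algorithm reductions among boundaries can be performed while remaining within boundaries. Everything else is the standard Cohn machinery together with bookkeeping of $\nu$-degrees; the action filtration enters \emph{only} here, through Lemma \ref{lma:bdyaction}, and controlling the Leibniz error term $\sum(-1)^{|c_j|}\partial(c_j)x_j$—whose primitives $x_j$ may a priori have much larger $\nu$-degree than the boundaries $\partial(x_j)$—is the heart of the argument and the step I expect to require the most care.
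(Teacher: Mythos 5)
You have correctly located the heart of the matter: a weak-algorithm reduction $\partial(x_i)\mapsto\partial(x_i)-\sum_{j<i}c_j\partial(x_j)$ leaves the class of boundaries unless the $c_j$ are cycles, and Lemma \ref{lma:bdyaction} is indeed the only place the action filtration enters. But the step you flag as ``the heart of the argument'' is precisely the one you do not carry out, and the mechanism you sketch for it does not work. Your key claim --- that any $\nu$-dependence among boundaries can be realized with cycle coefficients by ``replacing the top-degree part of each $c_j$ by a cycle with the same leading term'' --- presupposes that a cycle with a prescribed $\nu$-leading term exists, and in general it does not: e.g.\ for $\partial(b)=a$, $\partial(a)=0$ with $\nu(a)<\nu(b)$, no cycle has leading term $b$, since the leading term of $\partial$ applied to such an element would be $a$ and nothing of lower action can cancel it. Applying $\partial$ to the dependence relation only tells you that $\sum_j\partial(c_j)\partial(x_j)$ has small $\nu$; without $\nu$-independence of the $\partial(x_j)$ (which is exactly what is being constructed) this does not force each $\partial(c_j)$ to vanish or even to be small. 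A second, related overreach: you aim to produce a $\nu$-\emph{independent} generating family consisting of boundaries, which is stronger than the proposition requires (it asks only for a \emph{free} generating family of boundaries) and may not be achievable.

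The paper resolves this differently, and the difference is essential. It never converts all coefficients to cycles. Instead it builds the ideal up one generator at a time, maintaining \emph{two} triangularly related families: the boundaries $\partial(y_1),\hdots,\partial(y_{n_0})$ and an auxiliary $\nu$-independent family $b_1,\hdots,b_{n_0}$ obtained by reducing each $\partial(y_i)$ against the previous $b_j$ with arbitrary (non-cycle) coefficients; freeness of the boundary family is then inherited from $\nu$-independence of the $b_i$ through the triangular change of basis (property \ref{A'}). At each stage the new boundary $\partial(y_{n_0+1})$ is chosen, using the finite-dimensionality of $\mathfrak{a}^M=\mathfrak{a}\cap\nu^{-1}([-\infty,M])$, to \emph{minimize} the residual degree $m$ after reduction modulo $b_1,\hdots,b_{n_0}$ over all boundaries in $\mathfrak{a}\setminus\mathfrak{a}_{n_0}$. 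With that minimality in hand, only the single coefficient $v_{n_0+1}$ of the newest generator in a putative further dependence needs to be a cycle, and this \emph{is} forced: applying $\partial$, using $\nu(\partial(\cdot))<\nu(\cdot)$ and the $\nu$-independence of $b_1,\hdots,b_{n_0+1}$, gives $\nu(\partial(v_{n_0+1})b_{n_0+1})<\nu(b_{n_0+1})$, hence $\partial(v_{n_0+1})=0$; then $\partial(y-v_{n_0+1}y_{n_0+1})$ is a boundary contradicting the minimality of $m$. You should replace your ``correct all $c_j$ to cycles'' step with an argument of this global-minimality type (or prove your cycle-coefficient claim in full); as written, the proposal has a genuine gap at its central step.
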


\subsection{The proof of Proposition \ref{prp:main}}
In the following, we let $\nu$ be the degree-function induced by the action $\ell$, as constructed in Section \ref{sec:action} above. The fact that the differential $\partial$ strictly decreases the degree $\nu$ will turn out to be crucial; see Lemma \ref{lma:bdyaction}.

First observe that, for each $C \ge 0$, the $\kk$-subspace
\[ \mathcal{A}^C:= \nu^{-1}([-\infty,C]) \subset \mathcal{A}\]
is finite-dimensional. We write
\[ M:=\max_{i=1,\hdots,n}\nu(\partial(x_i))\]
and also
\[ \mathfrak{b}^C := \mathfrak{b} \cap \nu^{-1}([-\infty,C]), \:\: C\ge 0,\]
for any subset $\mathfrak{b} \subset \mathcal{A}$. Using this notation, it follows that $\partial(\mathcal{A}) \cap \mathfrak{a}^M$ generates $\mathfrak{a}$ as a left-ideal.

Given an ideal
\[ \mathfrak{a}_{n_0}=\mathcal{A}\partial(y_1)+\hdots+\mathcal{A}\partial(y_{n_0}) \subset \mathfrak{a}\]
contained inside $\mathfrak{a}$ which is generated by the boundaries $\partial(y_i)$, $i=1,\hdots,n_0$, we define the following two properties:
\begin{enumerate}[label=(\Alph*):\:, ref=(\Alph*)]
\item \label{A} The left ideal $\mathfrak{a}_{n_0}$ is freely generated by a $\nu$-independent family $b_1,\hdots,b_{n_0}$ satisfying
\[ \nu(b_1) \le \hdots \le \nu(b_{n_0})  \le M,\]
where there moreover are elements $u^i_j \in \mathcal{A}$ for which
\begin{gather*}
b_i = \partial(y_i)-(u^i_1b_1+\hdots+u^i_{i-1}b_{i-1}),\\
\nu(b_i) \le \nu(\partial(y_i)), \nonumber \\
\nu(u^i_jb_j) \le \nu(\partial(y_i)), \nonumber
\end{gather*}
is satisfied for each $i=1,\hdots,n_0$.
\item \label{B} For any boundary $\partial(y) \in \mathfrak{a} \setminus \mathfrak{a}_{n_0}$ we have
\[\nu(\partial(y)+x) \ge \nu(b_{n_0})\]
for each element $x \in \mathfrak{a}_{n_0}$.
\end{enumerate}
From \ref{A} we easily deduce:
\begin{enumerate}[label=(\Alph*'):\:, ref=(\Alph*')]
\item The boundaries $\partial(y_1),\hdots,\partial(y_{n_0})$ form a free generating set for the left-ideal $\mathfrak{a}_{n_0}$. In particular, $\partial(\mathfrak{a}_{n_0}) \subset \mathfrak{a}_{n_0}$. \label{A'}
\end{enumerate}
To see \ref{A'} one can argue by induction. Namely, given that
\[ v_1 \partial(y_1) + \hdots + v_{i-1}\partial(y_{i-1})+v_i\partial(y_{i}) =0 \]
is satisfied, Property \ref{A} implies the existence of elements $v_1',\hdots,v_{i-1}' \in \mathcal{A}$ for which
\[ v_1' b_1 + \hdots + v_{i-1}' b_{i-1}+v_{i}b_{i} =0,\]
and hence $v_i=0$ follows.

The proposition will be proven by the following induction argument. We start by choosing a non-zero boundary $\partial(y_1) \in \mathfrak{a}$ satisfying the property that, for any non-zero boundary $\partial(x) \in \mathfrak{a}$, we have
\begin{equation}
\label{eq:min1}
0 \le \nu(\partial(y_1)) \le \nu(\partial(x)).
\end{equation}
Such an element exists by the finite-dimensionality of $\mathfrak{a}^M$, together with the fact that $\partial(\mathcal{A}) \cap \mathfrak{a}^M$ generates $\mathfrak{a}$. We are now ready to establish the base of the induction.
\begin{lemma}[\bf The base case]
The left-ideal
\[ \mathfrak{a}_1 := \mathcal{A}\partial(y_1) \subset \mathfrak{a},\]
satisfies \ref{A} and \ref{B}.
\end{lemma}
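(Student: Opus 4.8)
The plan is to set $b_1 := \partial(y_1)$ and verify that this single-element family witnesses both \ref{A} and \ref{B}. Property \ref{A} is essentially formal. A singleton $\{b_1\}$ with $b_1 \neq 0$ is vacuously $\nu$-independent (the defining inequality cannot be strict for one element), and the claim that $\partial(y_1)$ \emph{freely} generates $\mathcal{A}\partial(y_1)$ reduces to the implication $a\partial(y_1)=0 \Rightarrow a=0$; this holds because the degree-function axiom $\nu(xy)=\nu(x)+\nu(y)$ forces $\mathcal{A}$ to have no zero divisors. The bound $\nu(b_1)\le M$ follows from the minimality \eqref{eq:min1}: if every generator boundary $\partial(x_i)$ vanishes then $\mathfrak{a}=0$ and the proposition is vacuous, so otherwise some $\partial(x_{i_0})\neq 0$ is a non-zero boundary in $\mathfrak{a}$, giving $\nu(\partial(y_1))\le \nu(\partial(x_{i_0}))\le M$. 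The remaining relations in \ref{A} hold trivially for $i=1$, the correction sum being empty.

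The content is in \ref{B}. I would fix a boundary $\partial(y)\in\mathfrak{a}\setminus\mathfrak{a}_1$ and an element $x=a\partial(y_1)\in\mathfrak{a}_1$, set $z:=\partial(y)+x$, and argue by contradiction, assuming $\nu(z)<\nu(b_1)=\nu(\partial(y_1))$. First $z\neq 0$, since $z=0$ would place $\partial(y)=-x\in\mathfrak{a}_1$. The key move is to apply $\partial$: using $\partial^2=0$ and the Leibniz rule one gets $\partial(z)=\partial(x)=\partial(a)\partial(y_1)$, which lies in $\mathfrak{a}_1\subseteq\mathfrak{a}$ and is manifestly a boundary. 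Since $z\neq 0$, Lemma \ref{lma:bdyaction} yields the strict drop $\nu(\partial(z))<\nu(z)<\nu(\partial(y_1))$.

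It then remains to split on whether $\partial(z)$ vanishes, and this case analysis is the main obstacle. If $\partial(z)=\partial(a)\partial(y_1)\neq 0$, it is a non-zero boundary in $\mathfrak{a}$ of $\nu$-degree strictly below $\nu(\partial(y_1))$, directly contradicting the minimality \eqref{eq:min1}. If instead $\partial(z)=0$, then $\partial(a)\partial(y_1)=0$ forces $\partial(a)=0$ (again using that $\mathcal{A}$ is a domain); decomposing $a$ into homogeneous components and applying the Leibniz rule to each $a_j\partial(y_1)=\pm\partial(a_jy_1)$ shows that $x=a\partial(y_1)$ is itself a boundary, so $z=\partial(y)+x$ is a non-zero boundary in $\mathfrak{a}$ of degree below $\nu(\partial(y_1))$, once more contradicting \eqref{eq:min1}. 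Either way we reach a contradiction, so $\nu(z)\ge\nu(b_1)$ and \ref{B} holds. The crux throughout is the interplay between the strict degree decrease of $\partial$ (Lemma \ref{lma:bdyaction}) and the minimal choice of $\partial(y_1)$, with the domain property of $\mathcal{A}$ used to convert the degenerate subcase back into an honest boundary.
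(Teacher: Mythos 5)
Your proof is correct and follows essentially the same strategy as the paper: assume a violation of \ref{B}, apply $\partial$ to the offending element, use Lemma \ref{lma:bdyaction} to force the coefficient of $\partial(y_1)$ to be a cycle, rewrite the element as a boundary of degree below $\nu(\partial(y_1))$, and contradict the minimality \eqref{eq:min1}. The only (harmless) divergence is that your Case 1 rules out $\partial(a)\neq 0$ by invoking \eqref{eq:min1} again, where the paper gets the same conclusion directly from the multiplicativity $\nu(\partial(a)\partial(y_1))=\nu(\partial(a))+\nu(\partial(y_1))\ge\nu(\partial(y_1))$.
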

\begin{proof}
Property \ref{A} is immediate with $b_1:=\partial(y_1)$, while \ref{B} is shown as follows. Assume by contradiction that there exists a boundary
\[\partial(y) \in \mathfrak{a} \setminus \mathfrak{a}_1\]
which satisfies
\begin{eqnarray}
\label{eq:b}
& & \nu(\partial(y)-v_1 \partial(y_1)) < \nu(b_1)=\nu(\partial(y_1)),
\end{eqnarray}
for some $v_1 \in \mathcal{A}$, and write $b:=\partial(y)-v_1 \partial(y_1)$. The identity
\[ \partial(b)=-\partial(v_1)\partial(y_1),\]
combined with Lemma \ref{lma:bdyaction} and Formula \eqref{eq:b} now implies that $\partial(v_1)=0$. Hence, we conclude that
\[ b=\partial(y-v_1y_1) \in \mathfrak{a} \setminus \mathfrak{a}_1+\mathfrak{a}_1=\mathfrak{a} \setminus \mathfrak{a}_1\]
which, since $\nu(b) < \nu(\partial(y_1)$, leads to a contradiction with Formula \eqref{eq:min1}.
\end{proof}
The following lemma provides us with the induction step.
\begin{lemma}[\bf The induction step]
\label{lem:step}
Whenever there is a left-ideal $\mathfrak{a}_{n_0} \subsetneq \mathfrak{a}$ satisfying \ref{A} and \ref{B}, there exists a boundary $\partial(y_{n_0+1}) \in \mathfrak{a} \setminus \mathfrak{a}_{n_0}$ for which
\[\mathfrak{a}_{n_0+1}:=\mathfrak{a}_{n_0}+\mathcal{A}\partial(y_{n_0+1})\]
again satisfies properties \ref{A}, \ref{B}.
\end{lemma}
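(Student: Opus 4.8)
The plan is to follow the template of the base case and push it through one more reduction step. I will single out a new boundary $\partial(y_{n_0+1})\in\mathfrak{a}\setminus\mathfrak{a}_{n_0}$ whose class modulo $\mathfrak{a}_{n_0}$ has the smallest possible $\nu$-value, divide it against the existing free generators to produce $b_{n_0+1}$, and then check properties \ref{A} and \ref{B} for $\mathfrak{a}_{n_0+1}$ in that order. Verifying \ref{B} is where the differential and the action filtration must be brought in, and this is the step I expect to be the main obstacle.

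To choose the generator, I would first record a ``distance to $\mathfrak{a}_{n_0}$'' for each boundary $\partial(y)\in\mathfrak{a}\setminus\mathfrak{a}_{n_0}$, namely $d(\partial(y)):=\min_{x\in\mathfrak{a}_{n_0}}\nu(\partial(y)+x)$. Because $b_1,\dots,b_{n_0}$ is $\nu$-independent, there is no top-degree cancellation, i.e.\ $\nu\bigl(\sum_j c_jb_j\bigr)=\max_j\nu(c_jb_j)$, and a short ultrametric argument identifies $d(\partial(y))$ with the $\nu$-value of any \emph{reduced form} $b=\partial(y)-\sum_{j\le n_0}u_jb_j$ (one not left $\nu$-dependent on $\{b_1,\dots,b_{n_0}\}$, with $\nu(u_jb_j)\le\nu(\partial(y))$); such a form exists by the division process underlying the weak algorithm of Theorem \ref{thm:fir}. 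Property \ref{B} gives $d(\partial(y))\ge\nu(b_{n_0})$, while $\mathfrak{a}_{n_0}\subsetneq\mathfrak{a}$ together with the fact that $\partial(\mathcal{A})\cap\mathfrak{a}^M$ generates $\mathfrak{a}$ guarantees some boundary outside $\mathfrak{a}_{n_0}$ with $d\le M$. The resulting set of distances is a non-empty, bounded subset of $\NN$, so I pick $\partial(y_{n_0+1})$ realising its minimum and let $b_{n_0+1}$ be a reduced form. This immediately yields the chain $\nu(b_1)\le\dots\le\nu(b_{n_0})\le\nu(b_{n_0+1})\le M$ and the degree inequalities required by \ref{A} for $i=n_0+1$.

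I would then check that $\{b_1,\dots,b_{n_0+1}\}$ is $\nu$-independent, which by Cohn's theory makes it a free generating family and gives $\mathfrak{a}_{n_0+1}=\mathcal{A}b_1+\dots+\mathcal{A}b_{n_0+1}$ (using $b_{n_0+1}\equiv\partial(y_{n_0+1})$ modulo $\mathfrak{a}_{n_0}$). If the family were $\nu$-dependent, the weak algorithm of Theorem \ref{thm:fir} would exhibit some $b_i$ as left $\nu$-dependent on its predecessors; for $i\le n_0$ this contradicts the $\nu$-independence of $\{b_1,\dots,b_{n_0}\}$, and for $i=n_0+1$ it contradicts the reducedness built into the choice of $b_{n_0+1}$. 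Together with the degree data this establishes \ref{A}.

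The core of the argument is \ref{B}, and the delicate interplay to watch lives here. Assume for contradiction a boundary $\partial(y)\in\mathfrak{a}\setminus\mathfrak{a}_{n_0+1}$ and $x\in\mathfrak{a}_{n_0+1}$ with $\nu(p)<\nu(b_{n_0+1})$, where $p:=\partial(y)+x$; note $p\neq0$ and $p\notin\mathfrak{a}_{n_0}$, since otherwise $\partial(y)\in\mathfrak{a}_{n_0+1}$. Writing $x=x_0+wb_{n_0+1}$ with $x_0\in\mathfrak{a}_{n_0}$ and applying the Leibniz rule, and using $\partial(\mathfrak{a}_{n_0})\subset\mathfrak{a}_{n_0}$ from \ref{A'} together with $\partial(b_{n_0+1})\in\mathfrak{a}_{n_0}$ (which follows since $\partial^2=0$ reduces $\partial(b_{n_0+1})$ to a left-combination of the $b_j$ and $\partial(b_j)$), one finds $\partial(p)\equiv\partial(w)\,b_{n_0+1}\pmod{\mathfrak{a}_{n_0}}$. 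Now Lemma \ref{lma:bdyaction} forces $\nu(\partial(p))<\nu(p)<\nu(b_{n_0+1})$; hence if $\partial(w)\neq0$ the displayed congruence expresses $\{b_1,\dots,b_{n_0+1}\}$ as a $\nu$-dependent family, because its $b_{n_0+1}$-term alone has degree $\ge\nu(b_{n_0+1})$, strictly exceeding $\nu(\partial(p))$, contradicting the independence just proved. Thus $\partial(w)=0$, so $wb_{n_0+1}$ agrees modulo $\mathfrak{a}_{n_0}$ with the honest boundary $(-1)^{|w|}\partial(wy_{n_0+1})$, giving $p=q+x_0'$ for a boundary $q\in\mathfrak{a}\setminus\mathfrak{a}_{n_0}$ and some $x_0'\in\mathfrak{a}_{n_0}$. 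Then $d(q)\le\nu(q+x_0')=\nu(p)<\nu(b_{n_0+1})=d(\partial(y_{n_0+1}))$, contradicting the minimality in the choice of $\partial(y_{n_0+1})$. It is precisely this passage — the action-filtration inequality of Lemma \ref{lma:bdyaction}, feeding into the $\nu$-independence of the enlarged family, which lets one trade the non-boundary $wb_{n_0+1}$ for a genuine boundary and return to the minimality hypothesis — that I expect to be the crux of the induction step.
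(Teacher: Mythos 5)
Your proposal is correct and follows essentially the same route as the paper: you select $\partial(y_{n_0+1})$ by minimising the $\nu$-distance to $\mathfrak{a}_{n_0}$ (the paper's condition \ref{min}), establish \ref{A} via the weak algorithm plus that minimality, and prove \ref{B} by applying $\partial$, invoking Lemma \ref{lma:bdyaction} and the $\nu$-independence of $b_1,\hdots,b_{n_0+1}$ to force $\partial(w)=0$, and then trading $wb_{n_0+1}$ for the genuine boundary $\pm\partial(wy_{n_0+1})$ to contradict minimality. The only differences are presentational (your explicit decomposition $x=x_0+wb_{n_0+1}$ versus the paper's rewriting via Formula \eqref{x}), so no further comment is needed.
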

We claim that the proposition follows from this induction step. Indeed, recall that $\mathfrak{a}^M \supset (\mathfrak{a}_N)^M$ is a finite-dimensional vector space. Property \ref{A} states that $\mathfrak{a}_N$ is freely generated by
\[ \nu(b_1) \le \nu(b_2) \le \hdots \le(b_N) \le M,\]
which implies that $\mathfrak{a}_N=\mathfrak{a}$ for some $N \ge 0$. The proposition now follows from \ref{A'}. What remains is proving the induction step.
\begin{proof}[Proof of Lemma \ref{lem:step}]
Since $\mathfrak{a}$ is generated by $\partial(\mathcal{A})\cap \mathfrak{a}^M$, the subset
\[(\mathfrak{a} \setminus \mathfrak{a}_{n_0})^M \subset \mathfrak{a}^M\]
must contain a boundary. Here we have used the assumption that $\mathfrak{a}_{n_0} \subsetneq \mathfrak{a}$.

The finite-dimensionality of $(\mathfrak{a}\setminus \mathfrak{a}_{n_0})^M$ implies that we may choose an element $\partial(y_{n_0+1}) \in (\mathfrak{a} \setminus \mathfrak{a}_{n_0})^M$ satisfying the following properties:
\begin{enumerate}[label=($*$), ref=($*$)]
\item Writing
\[ m:=\min_{v_i \in \mathcal{A}}\nu (\partial(y_{n_0+1})-(v_1 b_1 + \hdots + v_{n_0}b_{n_0})) \in \NN,\]
the number $m \ge 0$ is minimal in the sense that \label{min}
\begin{equation}
\label{eq:min}
\min_{v_i \in \mathcal{A}} \nu (\partial(y)-(v_1 b_1 + \hdots + v_{n_0}b_{n_0})) \ge m,
\end{equation}
is satisfied for any $\partial(y) \in \mathfrak{a} \setminus \mathfrak{a}_{n_0}$.
\end{enumerate}
By property \ref{B} of $\mathfrak{a}_{n_0}$ we conclude that the inequality
\begin{equation}
m \ge \nu(b_{n_0}) \label{eq:mgeq}
\end{equation}
is satisfied. Furthermore, from the $\nu$-independence of the family $b_1,\hdots,b_{n_0}$ we establish the following property. Given any $\partial(y) \in \mathfrak{a} \setminus \mathfrak{a}_{n_0}$ and $v_i \in \mathcal{A}$, $i=1,\hdots,n_0$, satisfying
\[\nu(\partial(y)-(v_1 b_1 + \hdots + v_{n_0}b_{n_0}))=m \le \nu(\partial(y_{n_0+1})),\]
the inequalities
\begin{eqnarray*}
\nu(v_ib_i) &\le& \nu(\partial(y))
\end{eqnarray*}
must be satisfied for each $i=1,\hdots,n_0$.

{\bf Property \ref{A} for $\mathfrak{a}_{n_0+1}$:} We fix $v_i \in \mathcal{A}$, $i=1,\hdots,n_0$, as above for which
\[\nu(\partial(y_{n_0+1})-(v_1 b_1 + \hdots + v_{n_0}b_{n_0})) = m \ge \nu(b_{n_0})\]
and write
\[ b_{n_0+1}:=\partial(y_{n_0+1})-(v_1 b_1 + \hdots + v_{n_0}b_{n_0}).\]
It suffices to show that $b_1,\hdots,b_{n_0},b_{n_0+1}$ is $\nu$-independent. By the weak algorithm, which holds by part (1) of Theorem \ref{thm:fir}, it thus suffices to show that $b_{n_0+1}$ is not $\nu$-dependent on the family $b_1,\hdots,b_{n_0}$. To that end, recall that the latter family is $\nu$-independent by assumption. 

Indeed, given elements $u_i \in \mathcal{A}$, $i=1,\hdots,n_0$, the inequality
\[\nu(b_{n_0+1}-(u_1 b_1 + \hdots + u_{n_0}b_{n_0})) \ge m =\nu(b_{n_0+1})\]
must hold since, otherwise, we would obtain
\[\nu(\partial(y_{n_0+1})-((v_1+u_1) b_1 + \hdots + (v_{n_0}+u_{n_0})b_{n_0}))<m,\]
which clearly is in contradiction the choice of $\partial(y_{n_0+1})$ in \ref{min}.

{\bf Property \ref{B} for $\mathfrak{a}_{n_0+1}$:} This property is shown using the previously established Property \ref{A}. By contradiction, take $\partial(y) \in \mathfrak{a} \setminus \mathfrak{a}_{n_0+1}$ and $v_1,\hdots,v_{n_0+1} \in \mathcal{A}$ satisfying
\[ \nu(\partial(y) -(v_1 b_1 + \hdots + v_{n_0}b_{n_0} +v_{n_0+1}b_{n_0+1}))<m=\nu(b_{n_0+1}).\]
It is immediate from the construction of $m \ge 0$ in \ref{min} that $v_{n_0+1} \neq 0$.

Property \ref{A} for $\mathfrak{a}_{n_0+1}$ shows that we can write
\begin{equation}
\label{x}
v_1 b_1 + \hdots + v_{n_0}b_{n_0} +v_{n_0+1}b_{n_0+1}=v_1' b_1 + \hdots + v_{n_0}'b_{n_0} +v_{n_0+1}\partial(y_{n_0+1}),
\end{equation}
for suitable elements $v_i' \in \mathcal{A}$, $i=1,\hdots,n_0$. After using Property \ref{A'} for $\mathfrak{a}_{n_0}$ together with the Leibniz rule, we compute
\[x:=\partial( v_1' b_1 + \hdots + v_{n_0}'b_{n_0} +v_{n_0+1}\partial(y_{n_0+1}))=v_1''b_1+ \hdots + v_{n_0}''b_{n_0}+\partial(v_{n_0+1})b_{n_0+1},\]
for suitable elements $v_i'' \in \mathcal{A}$, $i=1,\hdots,n_0$. Lemma \ref{lma:bdyaction} implies that $\nu(x) < m = \nu(b_{n_0+1})$. By the $\nu$-independence of $b_1,\hdots,b_{n_0+1}$ together with $\nu(b_{n_0+1}) = m$ it follows that $\partial(v_{n_0+1})=0$.

In conclusion, we can construct a boundary
\[\partial(y-v_{n_0+1}y_{n_0+1}) \in \mathfrak{a} \setminus \mathfrak{a}_{n_0+1}+\mathfrak{a}_{n_0+1}=\mathfrak{a} \setminus \mathfrak{a}_{n_0+1}\]
which by Formula \eqref{x} satisfies
\[ \nu(\partial(y-v_{n_0+1}y_{n_0+1})-(v_1' b_1 + \hdots + v_{n_0}'b_{n_0})) <m.\]
This is clearly in contradiction with the construction of $m\ge0$ in \ref{min}.
\end{proof}

\bibliographystyle{alphanum}
\bibliography{references}
\address{Georgios Dimitroglou Rizell\\
Centre for Mathematical Sciences\\
University of Cambridge\\
Wilberforce Road\\
Cambridge, CB3 0WB\\
United Kingdom\\
\email{g.dimitroglou@maths.cam.ac.uk}}

\end{document}